\setlist[enumerate,1]{label={(\roman*)}}
\setlist[enumerate,2]{label={(\alph*)}}
\setlist[enumerate,3]{label={(\arabic*)}}
\setlist[itemize]{nolistsep,noitemsep, topsep=.1in}
\setlist[enumerate]{nolistsep,noitemsep, topsep=.1in}
\newcommand{\mc}[1]{\mathcal{#1}}%
\DeclarePairedDelimiter{\set}{\{}{\}}
\DeclarePairedDelimiter{\floor}{\lfloor}{\rfloor}
\theoremstyle{plain}
\newtheorem{thm}{Theorem}[section]
\newtheorem{lem}[thm]{Lemma}
\newtheorem{proposition}[thm]{Proposition}
\newtheorem{conj}[thm]{Conjecture}
\theoremstyle{definition}
\newtheorem{definition}[thm]{Definition}
\newtheorem{remark}[thm]{Remark}
\crefname{equation}{}{}
\crefname{lem}{Lemma}{Lemmas}
\crefname{claim}{Claim}{Claims}
\crefname{thm}{Theorem}{Theorems}
\crefname{conj}{Conjecture}{Conjectures}
\crefname{enumi}{}{}
\title[Beyond Nash-Williams: Counterexamples to Clique Decomposition Thresholds]{Beyond Nash-Williams: Counterexamples to Clique Decomposition Thresholds for All Cliques Larger than Triangles}
\author{
Michelle Delcourt
 }
 \address{Department of Mathematics, Toronto Metropolitan University (formerly named Ryerson University), Toronto, Ontario M5B 2K3, Canada} \email{mdelcourt@torontomu.ca}
 \thanks{Delcourt's research supported by NSERC under Discovery Grant No. 2019-04269 and a Sloan Research Fellowship.}
\author{
 Cicely (Cece) Henderson
 }
 \author{
 Thomas Lesgourgues
 }
 \address{School of Mathematics and Statistics, UNSW Sydney, NSW 2052, Australia.} 
 \email{t.lesgourgues@unsw.edu.au}
 \thanks{This work was completed while the third author was at the University of Waterloo.}
 \author{
 Luke Postle
 }
 \address{Combinatorics and Optimization Department, University of Waterloo, Waterloo, Ontario N2L 3G1, Canada}
 \email{\{c3hender, lpostle\}@uwaterloo.ca}
 \thanks{Postle's research supported by NSERC under Discovery Grant No.\ 2019-04304.}
 \date{\today}
\newcommand{\TikzCaseFive}[1]{

\begin{tikzpicture}[x=1 cm, y=#1 cm, rounded corners]

    \draw[gray!40,fill=gray!20] (-1.75,1.3) rectangle (12,-1.6) ;
    \node at (9.5,-.5) [label=0:small edges]{};

    \draw[gray!80,fill=gray!40] (-1.5,1.25) rectangle (9.5,-1.5) ;
    \node at (3,-1.25) [label=0:cross edges]{};
    
    \begin{scope}
        \draw[gray,fill=gray!80] (-1,1) rectangle (2,-1) ;
        \node at (0,0.25) [label=0:$H$]{};
        \node at (-.75,-.25) [label=0:internal edges]{};        
    \end{scope}

    \begin{scope}[shift={(3.5,0)}]
        \draw[gray,fill=gray!80] (-1,1) rectangle (2,-1) ;
        \node at (0,0.25) [label=0:$H$]{};
        \node at (-.75,-.25) [label=0:internal edges]{};      
    \end{scope}

    \begin{scope}[shift={(7,0)}]
        \draw[gray,fill=gray!80] (-1,1) rectangle (2,-1) ;
        \node at (0,0.25) [label=0:$H$]{};
        \node at (-.75,-.25) [label=0:internal edges]{};       
    \end{scope}

    \begin{scope}[shift={(10.5,.5)}]
        \draw[gray,fill=gray!80] (-0.5,.5) rectangle (1,-.5) ;
        \node at (-0.1,0.25) [label=0:$I$]{};
        \node at (-0.1,-.25) [label=0:$\emptyset$]{};
    \end{scope}

\end{tikzpicture}}
\newcommand{\TikzCaseFour}[1]{

\begin{tikzpicture}[x=1 cm, y=#1 cm, rounded corners]

    \draw[gray!40,fill=gray!20] (-1.75,1.3) rectangle (10,-1.6) ;
    \node at (6.75,-.5) [label=0:small edges]{};

    \draw[gray!80,fill=gray!40] (-1.5,1.25) rectangle (6,-1.5) ;
    \node at (3,-1.25) [label=0:cross edges]{};
    
    \begin{scope}
        \draw[gray,fill=gray!80] (-1,1) rectangle (2,-1) ;
        \node at (0,0.25) [label=0:$H$]{};
        \node at (-1.12,-.25) [label=0:$H$-internal edges]{};        
    \end{scope}

    \begin{scope}[shift={(3.5,0)}]
        \draw[gray,fill=gray!80] (-1,1) rectangle (2,-1) ;
        \node at (0,0.25) [label=0:$H$]{};
        \node at (-1.12,-.25) [label=0:$H$-internal edges]{};      
    \end{scope}

    \begin{scope}[shift={(7,.5)}]
        \draw[gray,fill=gray!80] (-0.5,.5) rectangle (2.5,-.5) ;
        \node at (.5,0.25) [label=0:$I$]{};
        \node at (-0.6,-.25) [label=0:$I$-internal edges]{};
    \end{scope}

\end{tikzpicture}}
\begin{document}

\begin{abstract}
A central open question in extremal design theory is Nash-Williams' Conjecture from 1970 that every $K_3$-divisible graph on $n$ vertices (for $n$ large enough) with minimum degree at least $3n/4$ has a $K_3$-decomposition. A folklore generalization of Nash-Williams' Conjecture extends this to all $q\ge 4$ by positing that every $K_q$-divisible graph on $n$ vertices (for $n$ large enough) with minimum degree at least $\left(1-\frac{1}{q+1}\right)n$ has a $K_q$-decomposition. We disprove this conjecture for all $q\ge 4$; namely, we show that for each $q\ge 4$, there exists $c > 1$ such that there exist infinitely many $K_q$-divisible graphs $G$ with minimum degree at least $\left(1-\frac{1}{c\cdot(q+1)}\right)v(G)$ and no $K_q$-decomposition; indeed we construct them admitting no fractional $K_q$-decomposition thus disproving the fractional relaxation of this conjecture. Our result also disproves the more general partite version. Indeed, we even show the folklore conjecture is off by a multiplicative factor by showing that for every $\varepsilon > 0$ and every large enough integer $q$, there exist infinitely many $K_q$-divisible graphs $G$ with minimum degree at least $\bigg(1-\frac{1}{\left(\frac{1+\sqrt{2}}{2}-\varepsilon\right)\cdot (q+1)}\bigg)v(G)$ with no (fractional) $K_q$-decomposition.
\end{abstract}

\maketitle
 

\section{Introduction}

One of the most studied objects in design theory is a \emph{Steiner triple system} on $n$ elements, that is, a set $S$ of triples of an $n$-element set $X$ such that every pair of elements of $X$ is in exactly one triple in $S$. From a graph theoretic perspective, this is equivalent to a decomposition of the edges of the complete graph $K_n$ into edge-disjoint triangles; such a decomposition of the edges of a graph $G$ is called a \emph{triangle decomposition} or \emph{$K_3$-decomposition} of $G$. The obvious necessary conditions for a graph $G$ to admit a $K_3$-decomposition are that $3~|~e(G)$ and $2~|~d_G(v)$ for every $v\in V(G)$; such a graph is said to be \emph{$K_3$-divisible}. One of the oldest theorems in design theory is a result of Kirkman~\cite{K47} from 1847 that proves that the complete graph $K_n$ has a $K_3$-decomposition whenever $K_n$ is $K_3$-divisible (i.e.,~$n\equiv 1,3 \pmod 6$). 

A natural question is when a partial Steiner triple system (i.e.,~a set of edge-disjoint triangles) can be extended to a Steiner triple system. More generally, one may wonder whether `nearly' complete graphs admit a triangle decomposition provided they are $K_3$-divisible. Note one must impose some condition, such as high minimum degree, to ensure that every edge is in at least one triangle. The most central and famous conjecture in this direction is the following conjecture of Nash-Williams from 1970~\cite{nash1970unsolved}.

\begin{conj}[Nash-Williams~\cite{nash1970unsolved}]\label{conj:NW}
The following holds for sufficiently large $n$: If $G$ is a $K_3$-divisible graph on $n$ vertices with minimum degree at least $\frac{3}{4}n$,  then $G$ admits a $K_3$-decomposition.
\end{conj}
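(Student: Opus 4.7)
The plan is to follow the two-step paradigm now standard for clique-decomposition problems: first prove the fractional relaxation, and then lift it to an integer decomposition via absorption. That is, I would first show that every $K_3$-divisible graph $G$ on $n$ vertices with $\delta(G)\ge 3n/4$ admits a \emph{fractional} $K_3$-decomposition, meaning nonnegative triangle weights $(x_T)$ with $\sum_{T\ni e} x_T = 1$ for every edge $e$, and then convert this into an honest edge-disjoint triangle packing covering all of $E(G)$.

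For the lifting step I would invoke the iterative absorption framework of Barber--\Kuhn--Lo--Osthus: reserve a random absorbing subgraph $A\subseteq G$, run a R\"odl-nibble-style random greedy process on $G-A$ to peel off triangles until only a tiny $K_3$-divisible remainder $R$ remains, and then use $A$ to decompose $R \cup A$. This machinery is by now fairly robust and works whenever $G$ admits a fractional decomposition and has minimum degree any constant above $3n/4$, so the lifting step costs nothing essential and the real bottleneck is the fractional problem.

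For the fractional step I would attempt a weight-redistribution argument. Start from a near-uniform initial assignment $x_T^{(0)}$ (for instance $x_T^{(0)} \propto 1/t(e)$ averaged over the edges $e$ of $T$, where $t(e)$ denotes the number of triangles through $e$), compute the excess $\delta_e = \sum_{T\ni e} x_T^{(0)} - 1$ on each edge, and then iteratively redistribute weight along short ``triangle walks'' to drive every excess to zero. Equivalently, by LP duality one must rule out any signed edge weighting $y\colon E(G)\to \mathbb{R}$ with $\sum_{e\in T} y_e \le 0$ for every triangle $T$ and $\sum_e y_e > 0$. The hypothesis $\delta(G)\ge 3n/4$ guarantees that every edge lies in at least $n/2$ triangles and that common neighbourhoods are large enough to route weight between arbitrary pairs of edges, which is exactly the structural input one wants.

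The hard part will be matching the exact constant $3/4$. The extremal configurations are close to two copies of $K_{n/2}$ joined by an almost-complete bipartite graph: every cross edge lies in only $\Theta(n)$ triangles, and each such triangle has both of its other edges on the same side. This makes the LP extremely stiff at the boundary, and every known redistribution scheme has lost a constant at precisely this point, yielding bounds strictly above $3/4$. Closing the gap would presumably require a redistribution scheme that treats cross edges and side edges asymmetrically and fully exploits the $K_3$-divisibility of $G$ -- a step that has resisted every attempt for over fifty years and that I would expect to be the decisive obstacle.
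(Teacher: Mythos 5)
The statement you were asked to prove is \emph{not} a theorem of this paper; it is Nash-Williams' Conjecture, which the paper explicitly records as an open problem (and indeed, the paper's main contribution is to \emph{disprove} the analogous folklore conjecture for all $q\ge 4$, while leaving the $q=3$ case open). There is therefore no ``paper's own proof'' to compare against, and no correct proof is currently known to anyone.

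Your proposal correctly identifies the two-step reduction that is standard in the area: Barber, K\"uhn, Lo, and Osthus show that the integral problem reduces to the fractional relaxation (plus an arbitrarily small loss in the minimum degree constant), so the bottleneck is proving a fractional $K_3$-decomposition under $\delta(G)\ge 3n/4$. But your account of the fractional step is a plan, not a proof. You sketch a weight-redistribution / LP-duality heuristic and then candidly concede that ``every known redistribution scheme has lost a constant at precisely this point'' and that closing the gap ``has resisted every attempt for over fifty years.'' That concession \emph{is} the gap: the best fractional bound known is $0.82733n$ (Delcourt--Postle), strictly above $3n/4$, and nothing in your sketch gets below that. You have restated the problem and the standard strategy, not solved it. Separately, it is worth internalizing the cautionary message of this very paper: the matching folklore constant $1-\frac{1}{q+1}$ is \emph{false} for every $q\ge 4$, even fractionally, so the symmetry heuristics that made $3/4$ plausible for triangles are not load-bearing. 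The authors note the triangle case may genuinely be special (for $q=3$ the two roots $\frac{q+1}{2}$ and $q-1$ of their key quadratic coincide, so their construction has no room to beat $3/4$), but this is evidence of a structural coincidence, not of a proof.
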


In an addendum to Nash-Williams' article~\cite{nash1970unsolved}, Graham provided a construction showing that the fraction $3/4$ is necessary. Indeed, his construction also provides graphs with minimum degree $\frac{3}{4}n-1$ and no \emph{fractional $K_3$-decomposition} (an assignment of non-negative weights to the triangles of a graph $G$ such that for every edge $e$, the total weight of triangles containing $e$ is exactly $1$). Since a $K_3$-decomposition is also a fractional $K_3$-decomposition, the existence of a fractional $K_3$-decomposition is a necessary condition for the existence of a $K_3$-decomposition. Graham's construction thus shows that the fraction $3/4$ would also be tight for the \emph{fractional relaxation} of Nash-Williams' Conjecture. 

Nash-Williams' Conjecture may be seen as the foundational problem in what may be termed extremal design theory; it is instructive to compare this value to the fractions in the minimum degree thresholds for other extremal problems: namely the threshold is $1/2$ for the existence of at least one triangle (which follows from a classical theorem usually attributed to Mantel~\cite{mantel} from 1907) and $2/3$ for the existence of a \emph{triangle factor}, that is a set of vertex-disjoint triangles spanning all vertices (as given by the celebrated Corr\'adi-Hajnal Theorem~\cite{CH63}). The current best-known results on Nash-Williams' Conjecture are as follows: In 2016, Barber, K\"{u}hn, Lo, and Osthus~\cite{BKLO16} proved that if the fractional relaxation of Nash-Williams' Conjecture holds for minimum degree $cn$ for some constant $c\ge 3/4$, then Nash-Williams' Conjecture holds for any constant $c' > c$; combined with the best-known fractional result of Delcourt and Postle~\cite{DP2021progress} from 2021, this confirms that Nash-Williams' Conjecture holds for graphs with minimum degree at least $0.82733n$. This improved a series of earlier works on the fractional relaxation by Garaschuk~\cite{garaschuk2014linear} in 2014, Dross~\cite{dross2016fractional} in 2016, and Dukes and Horsley~\cite{dukes_minimum_2020} in 2020. Nash-Williams' Conjecture has become a central open question in design theory as various generalizations are now tied to it (e.g., the minimum degree threshold for an $F$-decomposition for any $3$-chromatic $F$~\cite{GKLMO19}, the minimum degree threshold for approximately packing families of $2$-regular graphs~\cite{CKKO19}, and the minimum degree threshold for high-girth triangle decompositions~\cite{DHLP25}). For more history on Nash-Williams' Conjecture and its generalizations, see the survey by Glock, K\"uhn, and Osthus~\cite{GKO20Survey}.

A core area of design theory is to consider analogous questions for decompositions into cliques larger than triangles. To this end, an \emph{$(n,q,2)$-Steiner system} is a set $S$ of $q$-subsets of an $n$-element set $X$ such that every pair of elements of $X$ is in exactly one element of $S$. From a graph theoretic perspective, this is equivalent to a decomposition of the edges of the complete graph $K_n$ into edge-disjoint copies of $K_q$; such a decomposition of the edges of a graph $G$ is called a \emph{$K_q$-decomposition} of $G$. The obvious necessary conditions for a graph $G$ to admit a $K_q$-decomposition are that $\binom{q}{2}~|~e(G)$ and $(q-1)~|~d_G(v)$ for every $v\in V(G)$; such a graph is said to be \emph{$K_q$-divisible}. In the 1970s, Wilson~\cite{W72-EC1, W72-EC2, W75-EC3} proved for all $q\ge 3$ that $K_n$ admits a $K_q$-decomposition provided it is $K_q$-divisible and $n$ is sufficiently large. This result revolutionized design theory and is the `graph' case of the infamous Existence Conjecture of Combinatorial Designs from the mid-1800s (only recently settled in full by Keevash~\cite{K14} in 2014). 

The following is a folklore generalization of Nash-Williams' Conjecture for $K_q$-decompositions (dating at least to Gustavsson's 1991 thesis~\cite{gustavsson1991decompositions}), a conjectured high minimum degree version of Wilson's Theorem.

\begin{conj}\label{conj:Folklore}
For each integer $q\ge 4$, the following holds for sufficiently large $n$: If $G$ is a $K_q$-divisible graph on $n$ vertices with minimum degree $\delta (G) \geq \left(1-\frac{1}{q+1}\right)n$,  then $G$ admits a $K_q$-decomposition.
\end{conj}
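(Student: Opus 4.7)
The statement is a conjecture and the abstract announces that this paper \emph{disproves} it, so my plan is to sketch how to construct explicit counterexamples. For each $q \geq 4$, I aim to exhibit $K_q$-divisible graphs $G$ on $n$ vertices with $\delta(G) \geq (1 - 1/(c(q+1)))n$ for some $c > 1$ that admit no \emph{fractional} $K_q$-decomposition -- a strictly stronger obstruction than ruling out an integral decomposition. The universal tool is linear-programming duality applied to the fractional decomposition LP: by Farkas' Lemma, $G$ has no fractional $K_q$-decomposition if and only if there exists a subgraph $H \subseteq G$ satisfying
\[
\frac{e(H)}{e(G)} \;<\; \frac{1}{\binom{q}{2}} \min_{K \text{ a copy of } K_q \text{ in } G} |E(K) \cap E(H)|.
\]
Thus the task reduces to co-designing $G$ and a witness $H$ for this inequality, while also controlling $\delta(G)$ and $K_q$-divisibility.

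The first natural attempt is a bipartite-like construction: partition $V(G) = A \sqcup B$ with $|A| = a$ small, keep $G[A]$ complete, and take $H := G[A]$. For $\min_K |E(K) \cap E(H)|$ to be positive, every $K_q$ of $G$ must meet $A$ in at least two vertices, which forces the common neighborhood in $B$ of every vertex of $A$ to be $K_{q-1}$-free; by Tur\'an's theorem this already caps $\delta(G)$ at roughly $(1 - 1/(q-2))n$, \emph{below} the conjecture's threshold $(1 - 1/(q+1))n$, so this simplest construction is not strong enough to refute the conjecture. To do better one must choose $H$ more cleverly (for instance as a bipartite subgraph between two parts of a finer partition, so that $|E(K) \cap E(H)|$ is typically of order $q^2/4$ rather than $1$) and/or allow $G$ to have a more intricate partite structure with prescribed co-degree bounds between parts. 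The optimal balance of the LP inequality against the minimum-degree constraint then becomes a quadratic extremal problem in the part sizes and non-edge densities, whose extremizer is what should produce the $(1+\sqrt{2})/2$ constant announced in the abstract.

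The main obstacle is precisely this co-design: producing $G$ (with $\delta(G)$ strictly above $(1 - 1/(q+1))n$ and $K_q$-divisible) together with an explicit $H$ for which the LP inequality above is verifiable and sharp. The expected ingredients are a multipart vertex partition with carefully controlled part sizes; a bipartite non-edge pattern between parts satisfying Zarankiewicz-type common-neighborhood bounds, chosen so that no $K_q$ of $G$ can ``avoid'' $H$ too heavily; and a final adjustment of $n$ and the part sizes modulo $q-1$ and $\binom{q}{2}$ to enforce $K_q$-divisibility, which should only affect lower-order terms in $\delta(G)$. The partite version of the conjecture mentioned in the abstract should then follow from a direct multipartite adaptation of this blueprint.
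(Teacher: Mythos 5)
Your framework is the right one: the paper's certificate of non-existence of a fractional $K_q$-decomposition is exactly the double-counting inequality you write down, namely exhibiting a set $H$ of edges with $e(H)/e(G) < \min_K |E(K)\cap E(H)| / \binom{q}{2}$, the minimum over copies $K$ of $K_q$ in $G$. (One caveat: the ``if and only if'' you attribute to Farkas is an overstatement, since the dual witness is a real-valued weighting of the edges rather than a $0$/$1$ indicator of a subgraph; but only the ``if'' direction is needed and it follows directly by summing $\partial\phi$ over $E(H)$ and over $E(G)$.) However, the proposal stops short of the step that actually makes the argument work, and the specific directions you sketch for co-designing $G$ and $H$ point away from the construction that succeeds.

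The missing idea is the construction itself: take $G = K_b\circ H$, the join of $b$ disjoint copies of a $d$-regular graph $H$ on $v$ vertices (with $q/2\le b<q$), and let the certificate edge set be the union of the $b$ copies of $H$ (the ``internal'' edges). The key bound is not a Tur\'an- or Zarankiewicz-type common-neighborhood estimate but plain pigeonhole: since $b<q$, any $K_q$ must place two of its vertices in a common part, and by convexity it uses at least $q-b$ internal edges. Feeding this into the double-counting inequality and simplifying gives precisely $d/v < (b-1)(q-b)/(\binom{q}{2}-(q-b))$ (Lemma 3.1), while $\delta(G) = (1-(1-d/v)/b)\,v(G)$. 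Optimizing shows that \emph{any} integer $b$ strictly between $(q+1)/2$ and $q-1$ beats the conjectured threshold; such an integer exists exactly for $q\ge 6$, and taking $b\approx q/\sqrt{2}$ gives the $(1+\sqrt{2})/2$ constant. Your guess that $H$ should be a bipartite subgraph engineered so $|E(K)\cap E(H)|$ is of order $q^2/4$ points the wrong way: in the winning construction $H$ is very sparse relative to $G$ and the guaranteed intersection is only linear in $q$, of order $(1-1/\sqrt{2})q$. You also do not address why $q=4$ and $q=5$ require separate constructions (no admissible integer $b$ exists there, and the paper adjoins an extra ``fractional part'' to the join), nor the divisibility issue, which the paper handles uniformly not by tweaking part sizes but by blowing up a single non-fractionally-decomposable $G$ by $\overline{K_m}$ with $q(q-1)\mid m$, which preserves the degree ratio and the absence of a fractional decomposition while forcing $K_q$-divisibility.
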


The above conjecture coincides with Nash-Williams' Conjecture when $q=3$. In 1991, Gustavsson~\cite{gustavsson1991decompositions} in his thesis provided a construction to show that this minimum degree value would be tight (this construction was reiterated by Yuster~\cite{Yuster2005asymptotically} in 2005). Indeed, his construction admits no fractional $K_q$-decomposition and so this value would also be tight for the fractional relaxation of the above conjecture. Again, it is instructive to compare this value to the fractions in the minimum degree thresholds for other extremal problems: namely a value of $1-\frac{1}{q-1}$ for the existence of at least one copy of $K_q$ which follows from the classical Tur\'an's Theorem~\cite{turan1941extremal} from 1941, and $1-\frac{1}{q}$ for the existence of a \emph{$K_q$-factor}, that is a set of vertex-disjoint $K_q$'s spanning all vertices, which follows from the famous Hajnal-Szemer\'edi Theorem~\cite{HS70} from 1970.  

Beyond the pleasing symmetry of Conjecture~\ref{conj:Folklore} with the corresponding extremal results noted above, there was even more evidence in favor of the conjecture in the form of the best-known results. In 2019, Glock, K\"{u}hn, Lo, Montgomery, and Osthus~\cite{GKLMO19} proved that if the fractional relaxation of Conjecture~\ref{conj:Folklore} holds for minimum degree $cn$ for some constant $c\ge 1-\frac{1}{q+1}$, then Conjecture~\ref{conj:Folklore} holds for any constant $c' > c$.  Meanwhile, Montgomery~\cite{montgomery_fractional_2019} in 2019 produced the best-known result on the fractional relaxation by obtaining a minimum degree bound of $(1-\frac{1}{100q})n$, notably the first bound whose denominator is linear in $q$. This improved upon a series of earlier works on the fractional relaxation by Yuster~\cite{Yuster2005asymptotically} in 2005, Dukes~\cite{D12, D15} in 2012, and Barber, K\"{u}hn, Lo, Montgomery, and Osthus~\cite{BKLMO17} in 2017. Conjecture~\ref{conj:Folklore} has also become paramount in the realm of extremal design theory as more variations are tied to it (such as the minimum degree threshold for an $F$-decomposition for any $q$-chromatic $F$~\cite{GKLMO19} and the minimum degree threshold for approximately packing certain families of $(q-1)$-regular graphs~\cite{CKKO19}).  

In light of this history, our main result may then come as a bit of surprise: Conjecture~\ref{conj:Folklore} is false for all $q\ge 4$ as follows.

\begin{thm}\label{thm:mainExistence}
     For each integer $q\geq 4$, there exist a $c>1$ and infinitely many $K_q$-divisible graphs $G$ with minimum degree at least $\big(1-\frac{1}{c\cdot (q+1)}\big)v(G)$ with no $K_q$-decomposition and even no fractional $K_q$-decomposition.
\end{thm}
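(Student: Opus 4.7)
My plan is to prove \cref{thm:mainExistence} by constructing, for each integer $q \geq 4$, an infinite family of explicit $K_q$-divisible graphs $G$ with $\delta(G) \geq \left(1 - \frac{1}{c \cdot (q+1)}\right) v(G)$ for some $c = c(q) > 1$, and then certifying the failure of fractional decomposition via LP duality. The fractional $K_q$-decomposition problem is a linear program: assign $x_K \geq 0$ to each copy $K$ of $K_q$ in $G$ so that $\sum_{K \ni e} x_K = 1$ for every $e \in E(G)$. By Farkas' lemma, this system is infeasible if and only if there is an edge weighting $w \colon E(G) \to \mathbb{R}$ with $\sum_{e \in E(G)} w(e) > 0$ while $\sum_{e \in K} w(e) \leq 0$ for every copy $K$ of $K_q$ contained in $G$. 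So the task reduces to exhibiting a suitable $G$ together with such a dual certificate $w$.

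For the construction, I would take a blow-up of a small template graph $T$: each vertex of $T$ is replaced by a cluster of appropriate size, all pairs of clusters are made completely bipartite, and each cluster is made internally complete, except that I place a carefully chosen sparse ``defect'' graph $H$ (removing or re-routing a small set of edges) inside one cluster or across one cluster-pair. The defect $H$ simultaneously lowers the minimum degree of $G$ by a controlled amount and seeds the dual certificate below. To obtain an infinite family, I would adjust the cluster sizes to enforce the divisibility conditions $\binom{q}{2} \mid e(G)$ and $(q-1) \mid d_G(v)$ for every vertex $v$; these are linear congruences in the cluster sizes and hence admit integer solutions for infinitely many total orders $n$.

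The dual certificate is the engine of the argument. I would assign weight $+1$ to every edge of $H$ and $-\alpha$ to every other edge of $G$, for a suitably small $\alpha = \alpha(q) > 0$. The two conditions to verify are: (i) that $|E(H)| - \alpha\bigl(|E(G)| - |E(H)|\bigr) > 0$, which requires $H$ to carry a non-negligible fraction of all edges; and (ii) that every $K_q \subseteq G$ contains at most $k^{\ast}$ edges of $H$, where $k^{\ast} - \alpha\bigl(\binom{q}{2} - k^{\ast}\bigr) \leq 0$. Condition (ii) is where the structure of $H$ matters: choosing $H$ to be bipartite, triangle-free, or more generally to have a bounded ``$K_q$-density'' ensures that no $K_q$ inside $G$ can pick up too many $H$-edges, because the remaining edges of each $K_q$ are forced into the rigid complete multipartite blow-up structure where they all receive weight $-\alpha$.

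The main obstacle is making these three competing pressures compatible while pushing the minimum degree above $\left(1 - \frac{1}{q+1}\right)v(G)$ by a \emph{constant} factor rather than a vanishing one. Namely, $H$ must be sparse enough at every vertex to preserve the strengthened minimum degree bound, sparse enough in $K_q$-copies to satisfy~(ii), and yet globally dense enough to satisfy~(i). This is a genuine optimization problem over the choice of template $T$ and defect $H$, and solving it is exactly what determines the constant $c > 1$ --- and, in the large-$q$ asymptotics, what ought to produce the multiplicative factor $\tfrac{1+\sqrt{2}}{2}$ appearing in the abstract.
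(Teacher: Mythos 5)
Your high-level framework --- a blow-up of a small template, LP duality/Farkas' lemma to certify fractional infeasibility, and adjusting cluster sizes to enforce $K_q$-divisibility --- is in the right spirit and matches the paper's strategy at a coarse level (the paper reduces to one non-divisible counterexample per $q$ via Proposition~\ref{prop:BlowingUp} and then argues a counting contradiction in Lemma~\ref{lem:construction}, which is exactly the Farkas certificate you describe, just written out as a double-counting chain). However, your dual certificate is set up in the wrong direction, and this is not a cosmetic issue: the mechanism that actually makes the paper's construction work is the \emph{opposite} of your condition (ii).

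In the paper's construction $G := K_b \circ H$ with $b < q$ and $H$ a $d$-regular graph on $v$ vertices, the sparse edge class is the set of internal edges (those inside a cluster), and the crucial pigeonhole observation is that \emph{every} copy of $K_q$ in $G$ contains \emph{at least} $q - b$ internal edges --- i.e., the sparse edges are \emph{over-demanded}, not under-used. Your proposal asks for the opposite: a sparse $H$ such that every $K_q$ contains \emph{at most} $k^*$ edges of $H$, with $H$ nevertheless carrying a macroscopic share of $e(G)$. You would then assign $+1$ to $H$-edges and $-\alpha$ elsewhere. But for the actual counterexamples the signs must be flipped: internal edges get $-1$, cross edges get $+\alpha$, and the constraint $\alpha \le \frac{q-b}{\binom{q}{2}-(q-b)}$ together with $\alpha > \frac{d}{(b-1)v}$ recovers exactly the paper's condition $\frac{d}{v} < \frac{(b-1)(q-b)}{\binom{q}{2}-(q-b)}$. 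Relatedly, your description of $H$ as a localized ``defect'' inside \emph{one} cluster, on the scale of ``removing or re-routing a small set of edges,'' does not match the real construction, where \emph{every} cluster is only $d$-regular (so roughly a $d/v \approx \sqrt{2}-1 \approx 0.41$ fraction of a clique for the asymptotic result) and the sparse class has $\Theta(n^2)$ edges. You also do not address why the construction should yield $c > 1$ for the specific boundary cases $q=4$ and $q=5$; the paper shows that the general $K_b \circ H$ scheme fails there because no integer $b$ lies strictly between $\frac{q+1}{2}$ and $q-1$, and handles them with ad hoc joins $H \ast H \ast H \ast I$ and $H \ast H \ast I$ where $I$ is a ``fractional part'' of size $\alpha v$ (an independent set for $q=5$, a sparse regular graph for $q=4$). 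So while the framing is compatible with the paper, the key combinatorial insight (pigeonhole forcing many internal edges in each $K_q$ when $b < q$) is missing, the certificate's inequality points the wrong way, and the small-$q$ obstructions are unaddressed.
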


Indeed, the folklore conjecture is not only false but the multiplicative factor is wrong as follows (namely any factor $c$ arbitrarily close to $\frac{1+\sqrt{2}}{2}\approx 1.207$ still yields a counterexample provided $q$ is large enough).

\begin{thm}\label{thm:mainAsymptotic}
    Let $c=\frac{1+\sqrt{2}}{2}$ and fix $\varepsilon>0$. For every large enough integer $q$, there exist infinitely many $K_q$-divisible graphs $G$ with minimum degree at least $\big(1-\frac{1}{(c-\varepsilon)\cdot (q+1)}\big)v(G)$ with no $K_q$-decomposition and even no fractional $K_q$-decomposition.
\end{thm}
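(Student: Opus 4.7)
The plan is to construct, for each sufficiently large $q$ and each fixed $\varepsilon>0$, an infinite family of $K_q$-divisible graphs $G$ admitting no fractional $K_q$-decomposition and satisfying the stated minimum-degree bound.

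\textbf{Construction.} I would define $G=G(k,m,s)$ on a vertex set partitioned into $k$ \emph{blocks} $H_1,\dots,H_k$ of size $m$ and a smaller \emph{remainder} $I$ of size $s$, with edges as follows: all cross edges between distinct blocks; a nearly-complete graph inside each $H_i$ (missing a controlled number of internal edges); a sparse, near-independent structure inside $I$; and a controlled set of edges between $I$ and each block. By tuning the counts of missing edges inside the $H_i$, inside $I$, and across the $I$-to-block cut, $G$ can be made $K_q$-divisible, i.e.\ $(q-1)\mid d_G(v)$ for every $v$ and $\binom{q}{2}\mid e(G)$. These conditions are linear-arithmetic in $m,k,s$ and can be met along arithmetic progressions in $m$ for each fixed $k$, producing infinitely many such $G$.

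\textbf{Fractional barrier.} By LP duality for the fractional $K_q$-packing/cover LP, it suffices to exhibit an edge weighting $y\colon E(G)\to\bb{R}$ with $\sum_{e\in K}y_e\ge 0$ for every copy $K\cong K_q$ in $G$ yet $\sum_{e\in E(G)}y_e<0$. I would place weight $+1$ on each \emph{small} edge (internal to $I$ or across the $I$-to-block cut), weight $-\lambda$ on each cross edge for a suitable constant $\lambda>0$, and weight $0$ on $H_i$-internal edges. Since $I$ is small, every $K_q$ in $G$ meets $I$ in at most a bounded number of vertices; writing the count of $K_q$'s through a small edge as a binomial count in $v(G)-s$ and summing, one can tune $\lambda$ so that every $K_q$ has non-negative total weight while $\sum_e y_e<0$ whenever the ratio $s/v(G)$ lies in the correct range.

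\textbf{Optimization.} Writing $\alpha=m/v(G)$ and $\beta=s/v(G)$, the minimum-degree ratio evaluates to roughly $\delta(G)/v(G)=1-\beta-o(1)$, while the LP obstruction from the previous step becomes a constraint $g(\alpha,\beta,q)\le 0$. Minimizing $\beta$ subject to this constraint and letting $q\to\infty$ reduces to a one-parameter quadratic optimization whose optimum is $\beta(q)=\frac{1}{c\,(q+1)}+o(1/q)$ with $c=\frac{1+\sqrt{2}}{2}$. The slack $\varepsilon$ is absorbed into bounded perturbations of $(m,k,s)$ needed to meet divisibility exactly.

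\textbf{Main obstacle.} The delicate step is the fractional barrier. Since $G$ is very dense, almost every edge lies in many copies of $K_q$, so only a carefully chosen defect region --- here $I$ together with its cut to the blocks --- produces a genuine LP obstruction. Balancing the two scales of small edges --- internal to $I$ versus across the $I$-to-block cut --- against the cross edges that carry the negative weight is what produces the quadratic governing $c$; without the second scale one recovers only Gustavsson's weaker ratio $c=1$.
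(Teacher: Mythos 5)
Your dual certificate fails as stated. You assign weight $+1$ to small edges (inside $I$ or across the $I$-to-block cut), $-\lambda<0$ to cross edges, and $0$ to block-internal edges. But any copy of $K_q$ lying entirely in the blocks and meeting at least two of them has no small edges and at least one cross edge, so its total dual weight is $-\lambda\cdot(\text{number of its cross edges})<0$, violating the required constraint $\sum_{e\in K}y_e\ge 0$. With nearly-complete blocks and all cross edges present such copies are plentiful, and no choice of $\lambda>0$ repairs this (while $\lambda=0$ gives $\sum_e y_e\ge 0$, i.e.\ no obstruction). The sign convention is backwards: positive dual weight must sit on whichever edge class is genuinely scarce. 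In the paper's construction behind Theorem~\ref{thm:mainAsymptotic}, $G=K_b\circ H$ with $b=\lfloor(q+1)/\sqrt{2}\rfloor$ and $H$ a $d$-regular graph with $d/v\approx\sqrt{2}-1\approx 0.41$; the blocks are therefore missing roughly $59\%$ of their internal edges, \emph{those} block-internal edges are the scarce class carrying positive dual weight, and there is no remainder part $I$ at all. The constant $\tfrac{1+\sqrt{2}}{2}$ emerges from the pigeonhole fact that for $q/2\le b<q$ every $K_q$ in $K_b\circ H$ uses at least $q-b$ internal edges, combined with the internal-to-cross edge ratio $d/(v(b-1))$, optimized at $b/q\to 1/\sqrt{2}$ and $d/v\to\sqrt{2}-1$ (Lemma~\ref{lem:construction} and the proof of Theorem~\ref{thm:mainAsymptotic2}). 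Your ``nearly-complete blocks plus a small sparse $I$'' places the defect in the wrong location; at that density there is no evident resource shortage for a fractional decomposition to run into. (A remainder $I$ does appear in the paper, but only for $q\in\{4,5\}$ where no integer $b$ lies strictly between $(q+1)/2$ and $q-1$, and even there the $H$-blocks remain sparse.)

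Separately, you do not need to force $K_q$-divisibility by direct arithmetic on $(m,k,s)$: Proposition~\ref{prop:BlowingUp} shows that a single (not necessarily divisible) graph $G$ with no fractional $K_q$-decomposition already yields infinitely many $K_q$-divisible counterexamples, since $G\circ\overline{K_m}$ has the same minimum-degree fraction, admits a fractional $K_q$-decomposition if and only if $G$ does, and is $K_q$-divisible whenever $q(q-1)\mid m$. Reducing to one non-divisible counterexample in this way removes the most delicate bookkeeping from your plan.
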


We note that in light of our results above, this may be the first instance in the literature of a problem where the ``threshold to embed absorbers'', namely $1-\frac{1}{q+1}$ via the result of Glock, K\"uhn, Lo, Montgomery, and Osthus~\cite{GKLMO19}, is below the ``fractional threshold'' (there are instances, such as in hypergraph matching, where the reverse is true; e.g.,~\cite{RRS06, RRS09}). Altogether, this makes determining the correct minimum degree threshold for $K_q$-decompositions quite a fascinating problem; at the current time, we would not hazard to guess whether the value in Theorem~\ref{thm:mainAsymptotic} is optimal or whether even better constructions may yet be found.

\subsection{Decompositions of \texorpdfstring{$q$}{q}-partite graphs}

Another one of the most well-studied objects in design theory is a \emph{Latin square} of order $n$, an $n$ by $n$ grid of cells each containing a symbol from $[n]$ such that every symbol appears exactly once in each row and column. From a graph theoretic perspective, this is equivalent to a triangle decomposition of the complete tripartite graph with $n$ vertices in each part, denoted $K_{n,n,n}$ (with the parts corresponding to rows, columns and symbols). The question of when a partial Latin square can be completed to a full Latin square has a rich history. Most relevant to our discourse here is the 1983 conjecture of Daykin and H{\"a}ggkvist~\cite{DH83} that a partial Latin square of order $n$ can be completed provided no row or column has more than $n/4$ entries and no symbol is used more than $n/4$ times.  

Daykin and H{\"a}ggkvist's Conjecture has a natural equivalent formulation from a graph theoretic perspective. For an integer $q\ge 2$, we say a graph $G$ is \emph{$q$-partite with $q$-partition $(V_1,\ldots,V_q)$} if $(V_1,\ldots,V_q)$ is a partition of $V(G)$ and for each $i\in [q]$, $V_i$ is an independent set in $G$; furthermore, if $|V_1|=|V_2|=\ldots=|V_q|$, we say such a graph is \emph{balanced}. For such a graph, we define its \emph{partite minimum degree} as
\[\hat{\delta}(G):=\min \{d(v, V_j): 1\leq j\leq q,\; v\in V(G)\setminus V_j\}.\]
In this graph theoretic setting, their conjecture is equivalent to conjecturing that if $G\subseteq K_{n,n,n}$ with $\hat{\delta}(G) \ge \frac{3}{4}n$ and $K_{n,n,n}\setminus G$ admits a triangle decomposition, then $G$ has a triangle decomposition. As in Nash-Williams' Conjecture, it seems unnecessary to assume the complement admits a triangle decomposition provided that $G$ satisfies certain divisibility constraints; here the necessary divisibility conditions for a balanced $q$-partite graph $G$ with $q$-partition $(V_1,\ldots,V_q)$ to admit a $K_q$-decomposition are that for all integers $1\leq j_1 < j_2\leq q$ and every $v\in V(G)\setminus(V_{j_1}\cup V_{j_2})$ we have $d(v,V_{j_1})=d(v,V_{j_2})$; we will call such a graph \emph{partite-$K_{q}$-divisible}. Here then is a partite version of Nash-Williams' Conjecture that generalizes Daykin and H{\"a}ggkvist's Conjecture (the conjecture is formally stated in Barber, K\"uhn, Lo, Osthus, and Taylor~\cite{BKLOT2017clique} but is implicit in Gustavsson~\cite{gustavsson1991decompositions}). 

\begin{conj}[Partite Version of Nash-Williams' Conjecture]\label{conj:PartiteNW}
The following holds for sufficiently large $n$: If $G$ is a balanced partite-$K_3$-divisible $3$-partite graph on $3n$ vertices with $\hat{\delta}(G)\ge \frac{3}{4}n$, then $G$ admits a $K_3$-decomposition.
\end{conj}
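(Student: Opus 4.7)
The plan would be to attack Conjecture~\ref{conj:PartiteNW} via the same two-step strategy that has been used on Nash-Williams' Conjecture itself: first establish the fractional relaxation at the conjectured minimum-degree threshold, then upgrade the fractional $K_3$-decomposition to an integer one via an absorption-based reduction. For the second step, the partite analogue of the Barber--K\"uhn--Lo--Osthus reduction, due to Barber, K\"uhn, Lo, Osthus, and Taylor~\cite{BKLOT2017clique}, asserts (roughly) that if every balanced partite-$K_3$-divisible $3$-partite graph with $\hat\delta \ge cn$ admits a fractional $K_3$-decomposition, then every such graph with $\hat\delta \ge c'n$ admits an integer $K_3$-decomposition for every constant $c' > c$ and sufficiently large $n$. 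The real task is therefore to pin the fractional partite triangle-decomposition threshold down to $3/4$.

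For the fractional step, I would try to adapt the approach of Delcourt and Postle~\cite{DP2021progress}, currently the strongest in the non-partite setting, to the balanced tripartite regime. The LP-duality framing asks: for any weighting $w\colon E(G)\to \mathbb{R}$ with $\sum_{e\in T}w(e)\ge 0$ on every triangle $T$, must $\sum_e w(e)\ge 0$? The natural starting point is to construct an explicit non-negative combination of triangles that covers every edge, where each triangle $uvx$ (with $u,v,x$ in distinct parts) receives a weight roughly proportional to the reciprocal of the number of common neighbours of $u,v$ in the third part; the resulting over- and under-coverage would then be corrected by local redistributions along short ``alternating paths'' of triangles, in the spirit of the Dross, Dukes--Horsley, and Delcourt--Postle sequence of refinements on the fractional triangle-decomposition threshold. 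A useful feature of the tripartite setting is that the three ``types'' of edges (one per pair of parts) can be tracked separately, which should make local balancing arguments cleaner than in the non-partite case; a complication is that the divisibility condition $d(v,V_{j_1})=d(v,V_{j_2})$ is genuinely two-sided and must be preserved by any local move.

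I expect the main obstacle to be exactly the one that blocks progress on Nash-Williams' Conjecture itself: establishing the fractional result at the \emph{exact} threshold $3/4$. Even in the non-partite setting, the best fractional bound is $\approx 0.82733n$, well above $3/4$, and the partite case has historically lagged slightly behind. Pushing all the way to the conjectured value would plausibly require a sharp structural understanding of the graphs that approach but do not cross the threshold, together with a weighting scheme tailored to a tripartite analogue of Graham's extremal construction from~\cite{nash1970unsolved}. Absent a fundamentally new idea, the realistic achievable partial progress is a $(3/4+\varepsilon)n$ threshold, mirroring the best non-partite bound in the tripartite setting; in particular, the proof strategy above gives no reason to expect $3/4$ to be the true answer, and in light of Theorems~\ref{thm:mainExistence} and~\ref{thm:mainAsymptotic} for $q\ge 4$ one should probably first revisit whether $3/4$ is even the right constant here.
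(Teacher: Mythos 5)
The statement you were asked to prove is a \emph{conjecture}, not a theorem: the paper states Conjecture~\ref{conj:PartiteNW} purely as background and neither proves nor disproves it, so there is no ``paper's own proof'' to compare against. Your text is, correctly, a research plan rather than a proof, and you say as much at the end; the honest conclusion is that this remains open.

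As a survey of the state of the art your sketch is essentially accurate, with one numerical slip worth flagging. The reduction from the fractional relaxation to the integer statement is indeed due to Barber, K\"uhn, Lo, Osthus, and Taylor~\cite{BKLOT2017clique}, so the entire difficulty sits in the fractional partite triangle threshold. However, the best known fractional bound in the balanced tripartite setting is $0.96n$ due to Bowditch and Dukes~\cite{BD15}; the $0.82733n$ figure you quote is the \emph{non-partite} bound of Delcourt and Postle~\cite{DP2021progress}, so ``lagged slightly behind'' understates the gap considerably. Your closing observation --- that in light of the paper's disproofs for $q\ge 4$ (Theorems~\ref{thm:mainExistence} and~\ref{thm:mainPartite}) one should re-examine whether $3/4$ is even the right constant for $q=3$ --- is perceptive and is echoed in the paper itself: in Section~\ref{ss:Intuition} the authors note that for $q=3$ the interval $\bigl(\frac{q+1}{2},\,q-1\bigr)$ used by their general construction degenerates to a point, so their machinery has no room to beat $3/4$, and they explicitly remark that ``the behavior of triangles is special and the original conjecture of Nash-Williams could be true.'' In short, there is nothing to grade here as a proof, but your diagnosis of the problem structure is sound once the $0.96n$ correction is made.
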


Once more, it is useful to consider the fractional relaxation of this conjecture; however, we note that in this partite setting, for $G$ to admit a fractional $K_3$-decomposition, it is necessary that $G$ is partite-$K_3$-divisible (since even `fractional' triangles have equal degree to the other parts). The best-known bound on the fractional relaxation is $0.96n$ due to Bowditch and Dukes~\cite{BD15} from 2015. In 2017 Barber, K\"uhn, Lo, Osthus, and Taylor~\cite{BKLOT2017clique} proved that if the fractional relaxation of Conjecture~\ref{conj:PartiteNW} holds for partite minimum degree $cn$ for some constant $c\ge 3/4$, then Conjecture~\ref{conj:PartiteNW} holds for any constant $c' > c$.

The generalization of the above questions to $K_q$-decompositions of $q$-partite graphs is quite interesting as $K_q$-decompositions of complete balanced $q$-partite graphs correspond to sets of mutually orthogonal Latin squares as follows. A pair of Latin squares of order $n$ is said to be \emph{orthogonal} if when superimposed, all ordered pairs of entries are distinct. From a graph theoretic perspective, an orthogonal pair of Latin squares of order $n$ is equivalent to a $K_4$-decomposition of the complete $4$-partite graph with parts of size $n$, denoted $K_{n,n,n,n}$ (with parts corresponding to the rows, columns, symbols from the first square, and symbols from the second square). More generally, a set of $q-2$ mutually orthogonal Latin squares of order $n$ is equivalent to a $K_q$-decomposition of the complete $q$-partite graph with parts of size $n$. 

The study of mutually orthogonal Latin squares (also known as transversal designs) has a rich history dating back centuries.  Especially of interest, it is rumored that in the 1700s Euler learned of a puzzle known as the ``Thirty-Six Officers Problem'' from Catherine the Great herself concerning the question of the existence of pairs of mutually orthogonal Latin squares of order 6~\cite{van2001course}. In 1782 Euler~\cite{E1782} proved the existence of pairs of mutually orthogonal Latin squares of order $n \equiv 0 \pmod 4$ and conjectured that there are no pairs of mutually orthogonal Latin squares of order $n \equiv 2 \pmod 4$.  In 1906 Tarry confirmed Euler's conjecture for $n=6$, settling the original ``Thirty-Six Officers Problem''. In 1959, Bose, Shrikhande, and Parker~\cite{BSP60, BS59} disproved Euler's Conjecture by showing that pairs exist for all such $n$ except 2 and 6; subsequently in 1960, using this work and incorporating additional insights from number theory, Chowla, Erd\H{o}s, and Straus \cite{CES60} showed that for each $q > 4$ and $n$ sufficiently large there exist $q-2$ mutually orthogonal Latin squares of order $n$.

The following partite version of the folklore conjecture, Conjecture~\ref{conj:Folklore}, was developed (it is formally stated by Barber, K\"uhn, Lo, Osthus, and Taylor~\cite{BKLOT2017clique} but is implicit in Gustavsson's work~\cite{gustavsson1991decompositions}).

\begin{conj}[Partite Version of the Folklore Conjecture]\label{conj:PartiteFolklore}
For each integer $q\ge 4$, the following holds for sufficiently large $n$: If $G$ is a balanced partite-$K_q$-divisible $q$-partite graph on $qn$ vertices with $\hat{\delta}(G)\ge \left(1-\frac{1}{q+1}\right)n$, then $G$ admits a $K_q$-decomposition.
\end{conj}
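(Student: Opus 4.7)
The plan is to disprove Conjecture~\ref{conj:PartiteFolklore} by lifting the non-partite counterexamples from \cref{thm:mainExistence,thm:mainAsymptotic} to the $q$-partite setting. Given a $K_q$-divisible counterexample $G$ with $\delta(G) \ge \bigl(1 - \tfrac{1}{c(q+1)}\bigr) v(G)$ and no fractional $K_q$-decomposition, my goal is to construct a balanced partite-$K_q$-divisible $q$-partite graph $H$ on parts $V_1,\ldots,V_q$ of equal size $n$ such that $\hat{\delta}(H) \ge \bigl(1 - \tfrac{1}{c(q+1)}\bigr) n$ and $H$ admits no fractional $K_q$-decomposition.

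For the construction I would take $H$ to be the complete balanced $q$-partite graph $K_{n,\ldots,n}$ with a carefully placed ``obstruction'' removed, mirroring the defect structure in $G$. Concretely, I would isolate the edge-set whose removal in the non-partite case kills every fractional $K_q$-decomposition and embed $q$ symmetric translates of that structure across the $q$-partite graph so that every part plays the same role. This symmetry is crucial for partite-$K_q$-divisibility: the condition $d(v,V_{j_1}) = d(v,V_{j_2})$ for every $v$ and every pair of other parts forces the obstruction to treat all parts equivalently. Provided $n$ is sufficiently large relative to $v(G)$, the partite minimum degree bound follows immediately because the removed edges form a small perturbation of the complete $q$-partite graph.

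The most substantial step is proving that $H$ has no fractional $K_q$-decomposition, for which I would use LP duality: construct an edge weighting $w : E(H) \to \mathbb{R}$ satisfying $\sum_{e \in E(K)} w(e) \le 0$ for every copy of $K_q$ in $H$, yet with $\sum_{e \in E(H)} w(e) > 0$. The natural guess is to transfer the dual witness from $G$ by placing large negative weight on the obstruction edges and calibrated positive weight on neighbouring edges. Here the partite structure is actually helpful: every $K_q$ in $H$ is a transversal and therefore uses exactly one edge between each pair of parts, so the per-clique inequalities decouple across pairs of parts in a way the non-partite setting does not allow.

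The main obstacle will be reconciling partite-$K_q$-divisibility with the need to preserve the LP obstruction. Partite divisibility is considerably more restrictive than ordinary $K_q$-divisibility because it demands a symmetric degree contribution into every other part, while the obstruction must be inserted asymmetrically enough to break the fractional decomposition. Resolving this tension will likely require an edge-disjoint union of translates of the non-partite obstruction indexed by a suitable orbit in the part-symmetry group, together with a verification that the dual certificate survives the symmetrisation. I would expect the resulting numerical bound to match the threshold of \cref{thm:mainExistence} (and \cref{thm:mainAsymptotic} asymptotically), since the lifting is essentially lossless in $n$.
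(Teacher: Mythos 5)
Your high-level strategy---lift the non-partite counterexamples of \cref{thm:mainExistence} to the $q$-partite world---is correct, but the machinery you propose to do the lifting is far heavier than needed and, as written, has genuine gaps: you never give a concrete construction of the obstruction, you leave the LP-dual certificate unverified, and you yourself flag that reconciling partite-$K_q$-divisibility with an asymmetrically-placed obstruction is an unresolved tension. None of these steps are routine, and the worry about symmetrising the obstruction across the part-symmetry orbit is a real one: the non-partite counterexamples in the paper are joins $K_b\circ H$, not ``$K_n$ minus a small defect,'' so ``mirroring the defect structure of $G$'' inside $K_{n,\ldots,n}$ does not have an obvious meaning.

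The paper sidesteps all of this with a single clean device: given a non-partite counterexample $G$, take the tensor product $G\times K_q$, i.e.\ the graph on $V(G)\times[q]$ with $(u,i)\sim(v,j)$ iff $uv\in E(G)$ and $i\ne j$. Then $(V(G)\times\{1\},\ldots,V(G)\times\{q\})$ is a balanced $q$-partition with parts of size $v(G)$, and one checks directly that $\hat{\delta}(G\times K_q)=\delta(G)$, that $G\times K_q$ is partite-$K_q$-divisible \emph{for every} $G$ (since $d\bigl((v,i),V_j\bigr)=d_G(v)$ for all $j\ne i$, the partite degree condition is automatic), and that $G\times K_q$ has a fractional $K_q$-decomposition if and only if $G$ does (push forward weights for one direction; average over the $q!$ partite copies of each clique for the other). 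This makes the divisibility constraint, the minimum-degree transfer, and the LP obstruction all come for free---exactly the three things your sketch struggles with. The lesson is that the right lift is multiplicative (tensor with $K_q$) rather than subtractive (remove an obstruction from $K_{n,\ldots,n}$); the former inherits the obstruction of $G$ structurally, so no dual certificate needs to be rebuilt or symmetrised. If you retain your approach, you would still owe: a precise definition of the translated obstruction, a proof that the union of translates preserves partite divisibility, and a verification that the symmetrised dual certificate is still feasible---each nontrivial and none addressed beyond a hope that the lifting is ``essentially lossless.''
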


Again we note that in this partite setting for a $q$-partite graph $G$ to admit a fractional $K_q$-decomposition, it is necessary that $G$ is partite-$K_q$-divisible. The best-known bound on the fractional relaxation is $\hat{\delta}(G)\geq (1-1/10^6q^{3})n$ by Montgomery~\cite{M17Partite} in 2017. As for the best-known result on Conjecture~\ref{conj:PartiteFolklore}, Barber, K\"uhn, Lo, Osthus, and Taylor~\cite{BKLOT2017clique} in 2017 proved that if the fractional relaxation of Conjecture~\ref{conj:PartiteFolklore} holds for partite minimum degree $cn$ for some constant $c\ge 1-\frac{1}{q+1}$, then Conjecture~\ref{conj:PartiteFolklore} holds for any constant $c' > c$. 

We should note that Montgomery~\cite{M17Partite} specifically singled out the fractional relaxation of Conjecture~\ref{conj:PartiteFolklore} as interesting in its own right. That said, Montgomery~\cite{M17Partite} noted that the fractional relaxation of Conjecture~\ref{conj:PartiteFolklore} implies the fractional relaxation of Conjecture~\ref{conj:Folklore}, and indeed furthermore if the former held for partite minimum degree $cn$, then the latter would hold for minimum degree $cn$. Thus it follows from Theorem~\ref{thm:mainExistence} that we immediately disprove Conjecture~\ref{conj:PartiteFolklore} for all $q\ge 4$ as follows. We explain below the connection between these two conjectures for the interested reader.

\begin{thm}\label{thm:mainPartite}
For every integer $q\geq 4$, there exist $c>1$ and infinitely many balanced partite-$K_q$-divisible $q$-partite graphs $G$ with $\hat{\delta}(G)\geq \left(1-\frac{1}{c\cdot (q+1)}\right) \frac{v(G)}{q}$ with no $K_q$-decomposition and even no fractional $K_q$-decomposition. 
\end{thm}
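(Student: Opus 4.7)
The plan is to deduce \cref{thm:mainPartite} from \cref{thm:mainExistence} via a standard ``partite blow-up'' reduction. Fix $q\ge 4$, let $c>1$ be the constant provided by \cref{thm:mainExistence}, and let $G$ be any of the infinitely many $K_q$-divisible counterexamples produced there, so that $n:=v(G)$ satisfies $\delta(G)\ge(1-\tfrac{1}{c(q+1)})n$ and $G$ admits no fractional $K_q$-decomposition. I construct the $q$-partite graph $G'$ on vertex set $V(G)\times[q]$ with parts $V_i:=V(G)\times\{i\}$ by declaring $(u,i)(v,j)\in E(G')$ exactly when $i\ne j$ and $uv\in E(G)$. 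Applying this blow-up to the infinite family from \cref{thm:mainExistence} evidently yields an infinite family of candidates $G'$, so it suffices to check that each such $G'$ inherits the required properties.

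The structural checks are immediate: $G'$ is balanced on $qn$ vertices, and for any $(u,i)\in V_i$ and $j\ne i$ we have $d_{G'}((u,i),V_j)=d_G(u)$, which is independent of $j$. Hence $G'$ is partite-$K_q$-divisible, and
\[\hat\delta(G')=\delta(G)\ge\left(1-\tfrac{1}{c(q+1)}\right)n=\left(1-\tfrac{1}{c(q+1)}\right)\frac{v(G')}{q},\]
as required.

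The crux is to show that $G'$ admits no fractional $K_q$-decomposition. Since $G$ has no loops, a $K_q$ in $G'$ must take one vertex $(v_i,i)$ from each $V_i$, and it is a clique precisely when $v_iv_j\in E(G)$ for all $i\ne j$, forcing $\{v_1,\dots,v_q\}$ to be a $K_q$ in $G$. Thus the $K_q$'s of $G'$ are in bijection with pairs $(K,\pi)$, where $K$ is a $K_q$ of $G$ and $\pi\colon K\to[q]$ is a bijection. Suppose for contradiction that $\omega\ge 0$ is a fractional $K_q$-decomposition of $G'$, and define $\omega'(K):=\tfrac{1}{q(q-1)}\sum_\pi \omega(K,\pi)$ for each $K_q$ $K$ of $G$. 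For fixed $uv\in E(G)$ and distinct $i,j\in[q]$, the decomposition constraint at the edge $\{(u,i),(v,j)\}\in E(G')$ reads $\sum_{(K,\pi):\,u,v\in K,\,\pi(u)=i,\,\pi(v)=j}\omega(K,\pi)=1$. Summing over the $q(q-1)$ ordered choices of $(i,j)$, each $(K,\pi)$ with $u,v\in K$ is counted exactly once (at $(i,j)=(\pi(u),\pi(v))$), whence $\sum_{K\ni u,v}\sum_\pi \omega(K,\pi)=q(q-1)$, and therefore $\sum_{K\ni u,v}\omega'(K)=1$. Thus $\omega'$ is a fractional $K_q$-decomposition of $G$, contradicting the choice of $G$ via \cref{thm:mainExistence}.

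The only obstacle I foresee is purely bookkeeping: keeping the normalization in the averaging step straight, and recording that the loopless-ness of $G$ is what supplies the bijection between $K_q$'s of $G'$ and labeled $K_q$'s of $G$. Neither is a real difficulty, and since any $K_q$-decomposition is in particular a fractional one, the same reduction simultaneously rules out integer decompositions, completing the proof.
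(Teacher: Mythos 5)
Your proof is correct and takes essentially the same approach as the paper. The graph $G'$ you build is exactly the tensor product $G\times K_q$ used there (attributed to Montgomery), and your transfer of a fractional $K_q$-decomposition from $G\times K_q$ to $G$ by averaging $\omega$ over the labeled copies $(K,\pi)$ — with the explicit, correct normalization $\tfrac{1}{q(q-1)}$ — is a careful rendering of the paper's informal remark that one should use ``the average of all partite copies.''
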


Here is the connection observed by Montgomery~\cite{M17Partite}. Recall that the \emph{tensor product} of two graphs $G_1$ and $G_2$, denoted $G_1 \times G_2$, is the graph with $V(G_1\times G_2) = V(G_1) \times V(G_2)$ where two vertices $(v_1, v_2)$ and $(u_1, u_2)$ are adjacent if and only if $v_1u_1 \in E(G_1)$ and $v_2u_2 \in E(G_2)$. Notice that the graph $G\times K_q$ is a balanced $q$-partite graph with parts of size $v(G)$ and that $\hat{\delta}(G\times K_q) = \delta(G)$. Moreover $G\times K_q$ is partite-$K_q$-divisible for any $G$ by construction. Finally, we note that $G\times K_q$ admits a fractional $K_q$-decomposition if and only if $G$ does. (The backward direction follows by using for a copy of $K_q$ in $G\times K_q$ the weight of its original copy in $G$ scaled down by a factor of $(q-2)!$,  while the forward direction follows by using as the weight of a clique in $G$ the average of all its partite copies in $G\times K_q$.)

\subsection{Outline of Paper}

In Section~\ref{s:Overview}, we provide a proof overview. First in Section~\ref{ss:BlowUp}, we explain how it suffices to find one counterexample to the fractional relaxation for each $q$ and then blow up such a counterexample to yield infinitely many divisible counterexamples. Then in Section~\ref{ss:Intuition}, we overview the construction of a counterexample for each $q$ with a particular eye toward explaining why the cases $q=4$ and $q=5$ require their own special constructions.  In Section~\ref{sec:large}, we formally codify our general construction, use it to construct counterexamples for all $q\ge 6$ in Section~\ref{ss:AlmostAllq} and then finish the proof of Theorem~\ref{thm:mainAsymptotic} in Section~\ref{ss:LargeCase}. In Section~\ref{sec:smallcases}, we provide the construction for $q=5$ in Section~\ref{ss:q5} and for $q=4$ in Section~\ref{ss:q4}. 

\section{Proof Overview}\label{s:Overview}

\subsection{Blow-ups: Reducing to one counterexample for each \texorpdfstring{$q$}{q}}\label{ss:BlowUp}

In this subsection, we show how it suffices to find one counterexample to the fractional relaxation of Conjecture~\ref{conj:Folklore} for each $q$ and then blow up such a counterexample to yield infinitely many divisible counterexamples. To that end, we recall the definition of graph composition (also known as \emph{lexicographic product} or \emph{graph substitution}) as follows.  The \emph{graph composition} of two graphs $G_1$ and $G_2$, denoted $G_1 \circ G_2$, is the graph with $V(G_1\circ G_2) = V(G_1) \times V(G_2)$ where two vertices $(v_1, v_2)$ and $(u_1, u_2)$ are adjacent if and only if either $v_1u_1 \in E(G_1)$, or, $v_1=u_1$ and $v_2u_2 \in E(G_2)$. This composition can be thought of as `blowing up' each vertex of $G_1$ to a copy of $G_2$ (where each edge of $G_1$ then becomes a complete bipartite graph); we note that this operation is not necessarily commutative.

Given a counterexample $G$, we then consider the composition of $G$ with an independent set of $m$ vertices, i.e.,~$G\circ \overline{K_m}$ (sometimes called the \emph{$m$-blowup} of $G$); here are the salient properties for our purposes.

\begin{proposition}\label{prop:BlowingUp}
For each integer $m\ge 1$ and graph $G$, all of the following hold:
\begin{itemize}\itemsep.1in
    \item[(a)] $\frac{\delta(G\circ \overline{K_m})}{v(G\circ \overline{K_m})} = \frac{\delta(G)}{v(G)}$,
    \item[(b)] if $q(q-1)~|~m$, then $G\circ \overline{K_m}$ is $K_q$-divisible,
    \item[(c)] $G\circ \overline{K_m}$ admits a fractional $K_q$-decomposition if and only if $G$ does. 
\end{itemize}
\end{proposition}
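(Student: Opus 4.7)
The plan is to verify each of the three items by direct computation, leaning on a single structural observation: every $K_q$-subgraph $K'$ of $G \circ \overline{K_m}$ must use vertices from $q$ distinct blobs (since vertices within a blob form an independent set), and hence projects to a unique $K_q$-subgraph $\pi(K')$ of $G$, namely the set of blobs that $K'$ meets. Conversely, each $K_q$-subgraph $K$ of $G$ lifts to exactly $m^q$ copies in $G \circ \overline{K_m}$, one for each choice of a representative in each of its $q$ blobs.

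Parts (a) and (b) are then immediate degree and edge counts. Each vertex $(v,i)$ has neighborhood $\{(u,j) : uv \in E(G),\, j \in [m]\}$, and so its degree equals $m \cdot d_G(v)$; together with the equality $v(G \circ \overline{K_m}) = m \cdot v(G)$ this gives (a). For (b), the same degree formula combined with $(q-1) \mid q(q-1) \mid m$ shows $(q-1) \mid m \cdot d_G(v)$ for every vertex, while $e(G \circ \overline{K_m}) = m^2 \cdot e(G)$ is divisible by $\binom{q}{2}$ since $\binom{q}{2} \mid q(q-1) \mid m \mid m^2$.

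For (c), I would transport weights through the projection/lift correspondence in both directions. Given a fractional $K_q$-decomposition $w$ of $G$, set $w^*(K') := m^{2-q} \cdot w(\pi(K'))$; for any edge $(u,i)(v,j)$ of $G \circ \overline{K_m}$, the $K_q$-subgraphs containing it correspond to pairs $(K,\sigma)$ where $K \ni uv$ is a $K_q$ in $G$ and $\sigma$ chooses a representative in each of the remaining $q-2$ blobs of $K$, so the weights add up to $m^{2-q} \cdot m^{q-2} \sum_{K \ni uv} w(K) = 1$. Conversely, given a fractional decomposition $w^*$ of $G \circ \overline{K_m}$, define $w(K) := m^{-2} \sum_{\pi(K') = K} w^*(K')$; for $uv \in E(G)$, swapping the order of summation and noting that each $K'$ with $uv \in E(\pi(K'))$ contains exactly one edge between blob $u$ and blob $v$ yields
\[
\sum_{K \ni uv} w(K) \;=\; m^{-2} \sum_{(i,j) \in [m]^2} \sum_{K' \ni (u,i)(v,j)} w^*(K') \;=\; m^{-2} \cdot m^2 \;=\; 1.
\]
The entire argument is just bookkeeping; the only point to watch is the factor $m^{q-2}$ counting lifts of a fixed $K$ that contain a fixed edge above $uv$, which is precisely what makes the two normalizations compatible in both directions.
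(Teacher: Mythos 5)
Your proof is correct and takes essentially the same approach as the paper's: parts (a) and (b) are the same direct degree and edge counts, and for (c) you use the same projection/lift correspondence, noting that every $K_q$ in $G\circ\overline{K_m}$ hits $q$ distinct blobs and hence projects to a $K_q$ of $G$, with each $K_q$ of $G$ lifting to $m^q$ copies. Where you go beyond the paper's terse sketch is in the normalization: the paper says to use ``the same weight'' in one direction and ``the average'' in the other, but taken literally those assignments are off by a multiplicative factor of $m^{q-2}$ (a fixed edge $(u,i)(v,j)$ above $uv\in E(G)$ lies in $m^{q-2}$ lifts of each $K_q\ni uv$, not one); your explicit factors $m^{2-q}$ and $m^{-2}$ are exactly what make the edge-sums equal $1$ in each direction, so your write-up is the precise version of the paper's informal statement.
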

\begin{proof}
Outcome (a) is immediate from the construction. Outcome (b) follows since the degree of each vertex of $G\circ \overline{K_m}$ is divisible by $m$, and hence by $q-1$, and in addition we also then find that $e(G\circ \overline{K_m})$ is divisible by $\frac{m}{2}$, and hence by $\binom{q}{2}$. For outcome (c), the backward direction follows by using for a copy of $K_q$ in $G\circ \overline{K_m}$ the weight of its original copy in $G$ scaled down by a factor of $m^{q-2}$, while the forward direction follows by using for the weight of a clique in $G$ the average of all its corresponding copies in $G\circ \overline{K_m}$.
\end{proof}

We note that outcome (c) above only holds for clique decompositions (as opposed to $F$-decompositions for non-complete $F$); this is because cliques in $G\circ \overline{K_m}$ correspond to cliques in $G$ while for a non-complete $F$, copies of $F$ in $G\circ \overline{K_m}$ may instead correspond to `identified' copies of $F$ in $G$ (for example consider that $K_2$ contains no $C_4$ but $K_2\circ \overline{K_m}$ has a fractional $C_4$-decomposition for all $m\ge 2$). 

Given Proposition~\ref{prop:BlowingUp}, in order to prove Theorems~\ref{thm:mainExistence} and~\ref{thm:mainAsymptotic} it suffices to prove the following theorems showing the existence of one counterexample $G$ (by taking $G\circ \overline{K_m}$ for all positive $m$ with $q(q-1)~|~m$). 

\begin{thm}\label{thm:mainExistence2}
     For each integer $q\geq 4$, there exist a $c>1$ and graph $G$ with minimum degree at least $\big(1-\frac{1}{c\cdot (q+1)}\big)v(G)$ with no fractional $K_q$-decomposition.
\end{thm}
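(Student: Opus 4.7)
The natural path is via LP duality: by Farkas' lemma, a graph $G$ admits no fractional $K_q$-decomposition if and only if there exists an edge weighting $w \colon E(G)\to \mathbb{R}$ with $\sum_{e\in E(G)}w_e > 0$ but $\sum_{e\in E(K)}w_e \leq 0$ for every copy $K$ of $K_q$ in $G$. I therefore aim to exhibit, for each $q\geq 4$, a graph $G$ together with such a witness $w$, and engineered so that $\delta(G)/v(G) > 1 - 1/(q+1)$.

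\textbf{Construction.} Following the blueprint sketched in the two TikZ pictures, I take disjoint vertex sets $V_1,\dots,V_k$ of size $h$ each together with a small patch $I$ of size $s \ll h$. Place a copy of a gadget graph $H$ on each $V_i$, all edges between distinct $V_i$'s (the \emph{cross} edges), and all edges between every $V_i$ and $I$ (the \emph{small} edges); $I$ is either independent or carries a chosen internal subgraph. The gadget $H$ is chosen so that, although most pairs inside $V_i$ are edges, a prescribed family of ``scarce'' edges of $H$ is forced to meet every $K_q$ that lies inside $V_i$. For $q\geq 6$ a single generic gadget suffices, whereas for $q\in\{4,5\}$ there are too many cliques inside a small $H$ for a generic gadget to handle, which is why Section~\ref{sec:smallcases} treats those cases separately.

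\textbf{Witness and verification.} Put a positive weight on all cross and small edges, a positive weight on the bulk of $H$'s internal edges, and a large negative weight $-M$ on the distinguished scarce edges of $H$ (with an appropriate sign chosen for the $I$-internal edges in the $q=4$ case). The global sum is positive provided the $\Theta(k^2h^2)$ cross edges and $\Theta(khs)$ small edges dominate the $O(kh^2)$ internal contribution, which can be arranged by taking $k$ large enough. The clique inequality is verified by a finite case analysis indexed by how the $q$ vertices of a clique $K$ distribute among $V_1,\dots,V_k$ and $I$: a clique contained in a single $V_i$ is controlled directly by the gadget, while a clique spanning multiple blocks contains many positive-weight cross edges that must be offset by its interaction with the scarce edges of each block it meets; the parameters of $H$ and $w$ are chosen so that every such type satisfies $\sum_{e\in E(K)}w_e \leq 0$. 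Finally, since every non-neighbor of a vertex lies inside its own $V_i$, we have $\delta(G)\geq v(G)-1-\Delta(\overline H)$, so choosing $h$ large enough that $\Delta(\overline H)/v(G) < 1/(q+1)$ yields $\delta(G)\geq (1 - 1/(c(q+1)))v(G)$ for some constant $c>1$, as required.

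\textbf{Main obstacle.} The delicate part is designing the gadget $H$ so that the dual inequality holds simultaneously across every clique type while keeping $\Delta(\overline{H})$ small: shrinking the scarce set inside $H$ raises the minimum degree but weakens the clique constraint, and the tension between these two pressures is what ultimately pins down the admissible values of $c$. For Theorem~\ref{thm:mainExistence2} any $c>1$ suffices, so the analysis can afford to be lossy; obtaining the sharper constant $(1+\sqrt{2})/2$ in Theorem~\ref{thm:mainAsymptotic} requires a more careful optimization of these same parameters, which is the content of Section~\ref{sec:large}.
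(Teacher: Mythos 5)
Your framing via LP duality is a legitimate reformulation (the paper's counting argument is precisely the dual certificate with weight $+1$ on cross edges and $-\lambda$ on internal edges for a suitable $\lambda>0$), but the construction and witness you sketch contain errors that would make the argument fail as stated.

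The central missing idea is the constraint on the \emph{number} of blocks. You propose choosing ``$k$ large enough'' so that the $\Theta(k^2h^2)$ cross edges dominate the global sum. But if $k\geq q$ there is a copy of $K_q$ that takes one vertex from each of $q$ different blocks; that copy contains only cross and small edges, all of which you assigned positive weight, so $\sum_{e\in K}w_e>0$ and the dual certificate fails. The paper's Lemma~\ref{lem:construction} works because the number of blocks $b$ satisfies $b<q$, which by pigeonhole (convexity of $\binom{\cdot}{2}$) forces every $K_q$ in $K_b\circ H$ to use at least $q-b$ internal edges. That forced quota of internal edges is what the negative weight acts on, and the threshold $d/v<(b-1)(q-b)/[\binom{q}{2}-(q-b)]$ is exactly the condition making the dual feasible. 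None of this is visible in your sketch; you cannot recover it by letting $k\to\infty$.

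Your gadget mechanism is also off: you describe ``scarce'' edges of $H$ that every $K_q$ \emph{inside} a block must hit, suggesting $H$ is dense with a distinguished small edge set. In the paper $H$ is a $d$-regular bipartite graph, so $H$ contains no $K_q$ at all for $q\geq3$; the internal edges consumed by a copy of $K_q$ are forced only because its $q$ vertices must be packed into $b<q$ blocks, not because of any structure inside $H$. Likewise your minimum-degree claim is confused: $\delta(G)/v(G) = 1-(1-d/v)/b$ is a fixed function of $b$ and $d/v$, so ``choosing $h$ large'' does nothing; what one must actually do is choose $b$ in the open interval $\bigl(\tfrac{q+1}{2},\,q-1\bigr)$, which is possible for integer $b$ only when $q\geq6$. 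The genuinely delicate part you flag — the tension between the dual inequality and the degree — is resolved in the paper by this interval, and the reason $q=4,5$ need bespoke constructions (joining $\lfloor(q+1)/2\rfloor$ copies of $H$ with an extra fractional part $I$) is precisely that this interval contains no integer there, not a combinatorial obstruction in designing the gadget $H$.
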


\begin{thm}\label{thm:mainAsymptotic2}
    Let $c=\frac{1+\sqrt{2}}{2}$ and fix $\varepsilon>0$. For every large enough integer $q$, there exists a graph $G$ with minimum degree at least $\big(1-\frac{1}{(c-\varepsilon)\cdot (q+1)}\big)v(G)$ with no fractional $K_q$-decomposition.
\end{thm}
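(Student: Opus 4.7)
The plan is to apply the general construction introduced in Section~\ref{sec:large}, tuning its free parameters so that the minimum-degree ratio is pushed toward $1 - \frac{1}{c\cdot(q+1)}$ with $c$ arbitrarily close to $\frac{1+\sqrt{2}}{2}$. That construction produces (from parameters $k, h, s$ and edge densities $\alpha, \beta$) a graph $G$ obtained by taking $k$ disjoint copies of a near-complete graph $H$ on $h$ vertices together with a small independent set $I$ of size $s$, and inserting \emph{cross edges} of density $\alpha$ between distinct copies of $H$ together with \emph{small edges} of density $\beta$ between $I$ and the copies of $H$. From Section~\ref{sec:large} I would extract two pieces of data: a closed-form expression for $\delta(G)/v(G)$ in terms of $(k,h,s,\alpha,\beta,q)$, and a sufficient condition — in the shape of an LP-duality weight-counting inequality — under which $G$ admits no fractional $K_q$-decomposition.

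With this machinery in hand, the proof of Theorem~\ref{thm:mainAsymptotic2} reduces to a constrained real optimization: among all tuples $(k,h,s,\alpha,\beta)$ satisfying the no-fractional-decomposition inequality, maximize $\delta(G)/v(G)$. Writing $\delta(G)/v(G) = 1 - \frac{1}{x(q+1)}$ and sending $q \to \infty$, the problem becomes a finite-dimensional program whose optimality condition reduces, via Lagrange multipliers, to a quadratic equation; I would identify its positive root as $\frac{1+\sqrt{2}}{2}$. For fixed $\varepsilon > 0$ I would then pick rational values of $k/q$, $h/q$, $s/q$ that achieve $x > c - \varepsilon$ at the continuous optimum, clear denominators to obtain integer parameters, and check that the resulting graph still satisfies the criterion of Section~\ref{sec:large}.

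The main obstacle is the optimization step: showing that $\frac{1+\sqrt{2}}{2}$ really is the supremum over this family, so that no ad hoc parameter tweak does strictly better. Concretely, one must compute the trade-off between (a) the density of small edges, which decreases $\delta(G)/v(G)$, and (b) the number of $K_q$'s that can cover a typical small edge, which drives the LP-duality inequality, and show that the balance is governed by a quadratic in a ratio such as $s/(kh)$ whose discriminant is $2$ — this is where $\sqrt{2}$ enters. A secondary difficulty is discretization: one must ensure the optimal real parameters correspond to integer parameters for infinitely many $q$, but since the LP-duality inequality is strict at the continuous optimum, this is absorbed by rounding and taking $q$ large enough in terms of $\varepsilon$.
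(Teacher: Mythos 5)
Your high-level plan (take the general construction from Section~\ref{sec:large}, view the minimum-degree fraction as a function of the free parameters, and optimize; recognize that the LP-duality counting inequality constrains those parameters; note that $\sqrt{2}$ will fall out of a calculus optimization) is the right spirit, but the concrete construction you describe does not match what the paper does for large $q$, and the mismatch is substantive rather than cosmetic. The paper's asymptotic construction is simply $G = K_b\circ H$: the join of $b=\lfloor (q+1)/\sqrt{2}\rfloor$ disjoint copies of a $d$-regular graph $H$ on $v$ vertices, with $d/v$ chosen just below $\sqrt{2}-1\approx 0.414$. There is no small independent set $I$, no ``small edges,'' and no tunable cross-edge density $\alpha$: the cross edges form complete bipartite graphs between every pair of $H$-copies (that is what the join does), and all the tuning happens in the single ratio $d/v$. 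The extra part $I$ that you have in mind appears only in the $q\in\{4,5\}$ constructions of Section~\ref{sec:smallcases}, where $\lfloor (q+1)/2\rfloor$ integral parts leave no room for an integer $b$ strictly inside $(\frac{q+1}{2}, q-1)$, so a ``fractional'' part is bolted on.

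Two of the specific ingredients you list would actively break the argument if you tried to run with them. First, you call $H$ a ``near-complete graph,'' but in fact $H$ must be rather sparse: the no-decomposition condition of Lemma~\ref{lem:construction} requires $d/v < \frac{(b-1)(q-b)}{\binom{q}{2}-(q-b)}$, which at the optimal $b\approx q/\sqrt{2}$ evaluates to roughly $\sqrt{2}-1$. A near-complete $H$ would have far too many internal edges and the counting contradiction would not materialize. Second, inserting cross edges only at density $\alpha<1$ would lower $\delta(G)$, moving you further from the target rather than closer, while doing nothing to help the internal-vs-cross edge count. Third, the optimization variable is not a ratio like $s/(kh)$ involving an auxiliary independent set; it is $\beta = b/q$, and the quantity to minimize is $\frac{1}{\beta}-2+2\beta$, whose critical point at $\beta=1/\sqrt{2}$ gives the value $2\sqrt{2}-2 = \left(\frac{1+\sqrt{2}}{2}\right)^{-1}$. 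So while you are right that a quadratic optimality condition produces $\sqrt{2}$, the variable and the function are different from what you sketch, and a Lagrange-multiplier setup over $(k,h,s,\alpha,\beta)$ is neither needed nor what the paper does. As stated, your construction would not yield a counterexample; you should drop $I$, drop the cross-edge density parameter, let $H$ have density near $\sqrt{2}-1$, and optimize in the single variable $b$.
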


We prove Theorem~\ref{thm:mainExistence2} for $q\ge 6$ in Section~\ref{ss:AlmostAllq} and Theorem~\ref{thm:mainAsymptotic2} in Section~\ref{ss:LargeCase}. We prove Theorem~\ref{thm:mainExistence2} for $q=5$ in Section~\ref{ss:q5} and for $q=4$ in Section~\ref{ss:q4}.

\subsection{Intuition for the constructions}\label{ss:Intuition}

In this subsection, we discuss the construction of a specific counterexample for each $q$, providing intuition why this yields a counterexample, where the asymptotic value in Theorem~\ref{thm:mainAsymptotic2} comes from, and why the cases $q=4$ and $q=5$ each require their own special construction. The formal constructions and proofs of Theorems~\ref{thm:mainExistence2} and~\ref{thm:mainAsymptotic2} can be found later in the paper.

It may be useful to the reader to understand that there have been, in fact, two different constructions in the literature showing that the value of $1-\frac{1}{q+1}$ in Conjecture~\ref{conj:Folklore} would have been tight if true. The first construction due to Gustavsson in 1991~\cite{gustavsson1991decompositions} on pages 85-86 of his thesis (and reiterated by Yuster~\cite{Yuster2005asymptotically}), is $K_{q-1}\circ H$ where $H$ is a $d$-regular graph on $v$ vertices for some specific $d$ and $v$ with the property that $d/v$ is close to but smaller than $2/(q+1)$. The second construction, due to Garaschuk in 2014~\cite{garaschuk2014linear} (and reiterated by Barber, K\"uhn, Lo, and Osthus~\cite{BKLO16}), is $(K_{q+1}-M)\circ K_t$ (the $t$-clique-blowup) where $M$ is a perfect matching of $K_{q+1}$ (for $q$ odd). In both constructions, one can show that no fractional $K_q$-decomposition exists by arguing there are not enough `internal' edges to decompose the `cross' edges. 

Now let us endeavor to build a very general construction which, as will be seen, not only encapsulates both of the examples above but also provides our promised counterexamples.  With this construction in hand, we will determine for what parameters it does not admit a fractional $K_q$-decomposition and subsequently optimize said parameters in order to maximize the minimum degree.  To that end, we recall the following definition.

\begin{definition}[Join]\label{def:join}
The \emph{join} of graphs $H_1,\ldots,H_b$ is the graph obtained by taking the vertex-disjoint union of $H_1,\ldots,H_b$ and adding a complete bipartite graph between $V(H_i)$ and $V(H_j)$ for all $1\le i < j \le b$. 
\end{definition}

In our general construction we fix some $d$-regular graph $H$ on $v$ vertices and then consider $G:= K_b\circ H$, where $b$ is some integer to be specified (equivalently $G$ is the join of $b$ copies of $H$). We note the particular graph $H$ is not important to the construction but rather ultimately only the parameters $d$, $v$, and $b$ matter. We call an edge of $K_b\circ H$ between distinct copies of $H$ a \emph{cross edge} and an edge of $K_b\circ H$ that lies inside a copy of $H$ an \emph{internal edge}. Note there are exactly $b\cdot \frac{dv}{2}$ internal edges of $K_b\circ H$ and exactly $\binom{b}{2} \cdot v^2$ cross edges of $K_b\circ H$. The important observation is the following: 
\vskip.05in
\emph{if $b < q$, then every copy of $K_q$ in $K_b\circ H$ contains at least $q-b$ internal edges}.  
\vskip.05in
\noindent This observation follows from the pigeonhole principle (indeed by the convexity of binomial coefficients it follows that the minimum of internal edges is attained when the vertices of a copy of $K_q$ are distributed as equitably as possible. This observation implies that internal edges can only decompose (even fractionally) cross edges at a ratio of $(q-b)$ to $\binom{q}{2}-(q-b)$. However, if the ratio of internal edges to cross edges, which is equal to $\frac{ (bdv)/2} {\binom{b}{2} v^2} = \frac{d}{v} \cdot \frac{1}{b-1}$ is smaller than this ratio, then we see that $K_b\circ H$ does not admit a fractional $K_q$-decomposition, i.e.,~when 
\begin{equation}\label{eq:condition_dv}
\frac{d}{v} < (b-1)\cdot \frac{q-b}{\binom{q}{2}-(q-b)};    
\end{equation}

\noindent
see Lemma~\ref{lem:construction} for a formal proof.

Thus for any fixed integer $q$ and integer $b<q$, we can construct a graph with no fractional $K_q$-decompositions provided $H$ satisfies that $\frac{d}{v}$ is smaller than the number on the right side of the above inequality. This being said, \emph{what is the minimum degree of $K_b\circ H$?} Calculating, we find that 
$$\delta\left(K_b\circ H\right) = (b-1)v + d = bv-(v-d) = \left(1- \frac{1-\frac{d}{v}}{b}\right) \cdot bv= \left(1- \frac{1-\frac{d}{v}}{b}\right) v\left(K_b\circ H\right).$$
We note then that all that matters is the ratio $\frac{d}{v}$ for $H$. Furthermore, the minimum degree computed above increases as $\frac{d}{v}$ increases. Hence choosing $\frac{d}{v}$ arbitrarily close to the upper bound in the earlier inequality will yield graphs with no fractional $K_q$-decomposition with the largest minimum degree. What remains then is to maximize this minimum degree value over the range of $b$ smaller than $q$.

It is instructive now to return to the two known constructions mentioned earlier. First recall that Gustavsson's construction is of the form $K_{q-1}\circ H$. In our terms, this means that $b=q-1$ and hence the right side of (\ref{eq:condition_dv}) equals $(q-2)\cdot \frac{1}{\binom{q}{2}-1}= (q-2)\cdot \frac{2}{q^2-q-2} = \frac{2}{q+1}$. We note that this matches the degree ratio of $H$ from Gustavsson's construction and yields a minimum degree fraction of $1-\left(\frac{1}{q-1}\cdot\left(1-\frac{2}{q+1}\right)\right)=1-\left(\frac{1}{q-1}\cdot\left(\frac{q-1}{q+1}\right)\right) = 1 - \frac{1}{q+1}.$

Second, while at first glance Garaschuk's construction of $(K_{q+1}-M)\circ K_t$ may not appear to be of this form, we note her construction can be reinterpreted as $K_{(q+1)/2} \circ H$ where $H$ is the disjoint union of two copies of $K_t$ (so $H$ is a $(t-1)$-regular graph on $2t$ vertices). In our terms, this means that $b=\frac{q+1}{2}$ and $q-b=q-\frac{q+1}{2} = \frac{q-1}{2}$, and hence the right side of (\ref{eq:condition_dv}) equals 
$\left(\frac{q+1}{2} - 1\right)\cdot \frac{(q-1)/2}{\binom{q}{2}-(q-1)/2}= \left(\frac{q-1}{2}\right)\cdot \frac{(q-1)/2}{\binom{q}{2}-(q-1)/2} = \frac{(q-1)^2/4}{(q-1)^2/2} = \frac{1}{2}.$ We note that this matches the degree ratio of $H$ from our reinterpretation of Garaschuk's construction and yields a minimum degree fraction of $1- \frac{1 - \frac{1}{2}}{(q+1)/2} = 1 - \frac{1}{q+1}.$ 

It is useful to note that for $b=\frac{q}{2}$, the right side of~\cref{eq:condition_dv} gives $(\frac{q}{2} - 1)\cdot \frac{q/2}{\binom{q}{2}-q/2} = \frac{q(q-2)/4}{q(q-2)/2} = \frac{1}{2}$, and hence a minimum degree fraction of $1- \frac{1 - \frac{1}{2}}{q/2}=1-\frac{1}{q}$. So perhaps it is not surprising then to realize that the derived minimum degree value as a function of $b$ is a concave function from $b=\frac{q+1}{2}$ to $b=q-1$ with the two earlier constructions yielding matching values of $1-\frac{1}{q+1}$. We note that this means that \emph{any value of $b$ strictly between $\frac{q+1}{2}$ and $q-1$ provides counterexamples to Conjecture~\ref{conj:Folklore} once the ratio $\frac{d}{v}$ is optimized!} 

Now we arrive at the crux of why our general construction only works for $q\ge 6$: for every integer $q\ge 6$, there exists an \emph{integer} $b$ strictly between $\frac{q+1}{2}$ and $q-1$. However, for $q=5$, these values are $3$ and $4$ respectively; whereas for $q=4$, these values are $\frac{5}{2}$ and $3$. 

For these cases, our workaround is to add one more ``fractional part'' beyond the $\lfloor \frac{q+1}{2} \rfloor$ integral parts of the construction. For $q=5$, we target $b$ being slightly bigger than $3+\frac{4}{7} = \frac{25}{7}$; our construction then is obtained by the join of $3$ copies of $H$ and one copy of $I$, an independent set on slightly more than $\frac{4}{7}\cdot v$ vertices (and $\frac{d}{v}$ slightly less than $\frac{3}{7}$). This yields a minimum degree fraction approaching $1 - \frac{4/7}{25/7} = 1 - \frac{4}{25} = 1 - \frac{1}{6.25}$. Our contradiction, however, is more complicated in that we argue that the number of internal edges and edges to the fractional part are not enough to decompose the edges between copies of $H$; see Lemma~\ref{lem:SmallCaseFive} for the formal proof. For $q=4$, we target $b$ being around $2+(\sqrt{3}-1) \approx 2.71$ parts; our construction then is obtained by the join of $2$ copies of $H$ and one more graph $I$ on around $(\sqrt{3}-1)\cdot v$ vertices. However, here $I$ is too large to be an independent set and so we carefully choose it to be a regular graph of appropriate degree (around $0.196\cdot v$). This yields a minimum degree fraction of roughly $1-\frac{1}{5.098}$. Again the contradiction is more complicated; see Lemma~\ref{lem:SmallCaseFour} for the formal proof.

Notice that for $q=3$, the values $\frac{q+1}{2}$ and $q-1$ both equal $2$; so in this case there is no fractional value to target in order to obtain a larger minimum degree value. \emph{Indeed, it may well be that the behavior of triangles is special and the original conjecture of Nash-Williams could be true!}

As for the asymptotic estimate, we optimize as follows. Let $b = \beta\cdot q$ with $\beta \in [\frac{1}{2},1]$ to be optimized. The right side of our inequality~\eqref{eq:condition_dv} may be approximated for large $q$ as $b\cdot \frac{q-b}{q^2 /2 } =\beta\cdot q\cdot \frac{q-\beta\cdot q}{q^2 /2 } = 2\cdot \beta \cdot (1-\beta)$. This yields a minimum degree fraction of $1 - \frac{1 - 2\cdot \beta\cdot (1-\beta)}{\beta\cdot q}$. The right term may be simplified to $\frac{1-2\beta+2\beta^2}{\beta\cdot q} = \frac{1}{q} \cdot \left(\frac{1}{\beta} - 2 + 2\beta\right)$. Yet $\frac{1}{\beta} - 2 + 2\beta$ is a convex function in $\beta$ whose minimum is attained when the derivative, $-\frac{1}{\beta^2} + 2$, is set to zero. This yields $\beta = \frac{1}{\sqrt{2}}$ and a value for the function of $2\sqrt{2} - 2 \approx 0.828$. Note that $\frac{1}{2\sqrt{2}-2} = \frac{1+\sqrt{2}}{2} \approx 1.207$. Thus asymptotically, we arrive at a minimum degree fraction of $1 - \frac{2\sqrt{2}-2}{q} = 1 - \frac{1}{\frac{1+\sqrt{2}}{2}\cdot q}$. See the proof of Theorem~\ref{thm:mainAsymptotic2} in Section~\ref{ss:LargeCase} for a formal proof. 

We also note that the optimal value for $\frac{d}{v}$ for such a $b$ then is around $2\cdot \frac{1}{\sqrt{2}}\cdot \left(1-\frac{1}{\sqrt{2}}\right) = \sqrt{2}-1 \approx 0.414$. For example $C_5$ has degrees quite close to this ratio; indeed $K_5\circ C_5$ admits no fractional $K_7$-decomposition since it satisfies our inequality for $b=5$ and $q=7$ (namely $\frac{2}{5} < \frac{8}{19}$).  Yet $K_5\circ C_5$ has minimum degree $1-\frac{3}{25} = 1 - \frac{1}{8.3\overline{3}}$ and thus, is a counterexample to Conjecture~\ref{conj:Folklore} for $q=7$. 

\begin{remark}
The inquisitive reader may wonder what happens with our construction for $b < \frac{q}{2}$; these values of $b$ still provide counterexamples to Conjecture~\ref{conj:Folklore} but not as large a minimum degree value as obtained when $b\approx \frac{q}{\sqrt{2}}$. Suppose that $b=\frac{q}{k+\alpha}$ where $k\ge 1$ is integer and $\alpha \in [0,1]$. A stronger pigeonholing argument shows that much more than $q-b$ internal edges are used for $k\ge 2$; as mentioned before, the minimum value of internal edges is achieved when the vertices of a copy of $K_q$ are distributed as equitably as possible among the parts, that is, when using $k+1$ vertices from $\alpha\cdot b$ parts and $k$ vertices from $(1-\alpha)\cdot b$ parts. Hence at least $\alpha\cdot b\cdot \binom {k+1}{2} + (1-\alpha)\cdot b\cdot \binom{k}{2} = (k+2\alpha-1)\cdot \frac{kb}{2}$ internal edges are used. Call this number $\gamma$. It follows that $K_b\circ H$ admits no fractional $K_q$-decomposition if $\frac{d}{v} < (b-1) \cdot \frac{\gamma}{\binom{q}{2}-\gamma}$. For large $q$, this is approximately equal to 
\[b\cdot \frac{\gamma}{q^2/2} = (b^2/2)\cdot \frac{(k+2\alpha-1)\cdot k}{q^2/2} = \frac{(k+2\alpha-1)\cdot k}{(k+\alpha)^2}.\]
This in turn yields a minimum degree fraction of
\[1-\frac{1-\frac{(k+2\alpha-1)\cdot k}{(k+\alpha)^2}}{b} = 1 - \frac{(k+\alpha)^2 - (k+2\alpha-1)\cdot k}{(k+\alpha)\cdot q} = 1-\frac{k+\alpha^2}{(k+\alpha)\cdot q}.\] To maximize this, we minimize $\frac{k+\alpha^2}{k+\alpha} = 1 -\frac{\alpha \cdot (1-\alpha)}{k+\alpha}$ for $\alpha\in[0,1]$ and $k$ fixed. Clearly setting $k$ smaller for any fixed $\alpha$ will only make this value smaller (so the $k=1$ `interval' dominates all others); meanwhile for $k\ge 2$, the right term is at most $\frac{1/4}{2} \le \frac{1}{8}$ leading to a value no better than $\frac{8}{7}\cdot q \approx 1.14\cdot q$ in the denominator (which is smaller than the approximately $1.207q$ in Theorem~\ref{thm:mainAsymptotic2}).  Nevertheless for \emph{any} integer $k\ge 1$ and \emph{any} real $\alpha \in (0,1)$, the value $\frac{k+\alpha^2}{k+\alpha}$ is strictly less than $1$ and so would supply counterexamples to Conjecture~\ref{conj:Folklore} for $q$ large enough (even multiplicatively), see~\cref{fig:multiplicativeconstant} for the constant $c=c(k,\alpha)$ obtained. Viewed holistically from this asymptotic view then, all the ``integral'' ratios for number of parts (the \emph{harmonies}) yield a minimum degree value of $1-\frac{1}{q}$ asymptotically while the ``non-integral'' ratios (the \emph{discordances}) yield asymptotically larger minimum degree values with the largest such value lying in the first harmonic interval at a value of $\beta \approx \frac{1}{\sqrt{2}}$ (probably to the great dismay of the Pythagorean school).
\end{remark}

\begin{figure}[H]
    \centering
    \includegraphics[height=.3\linewidth,width=0.6\linewidth]{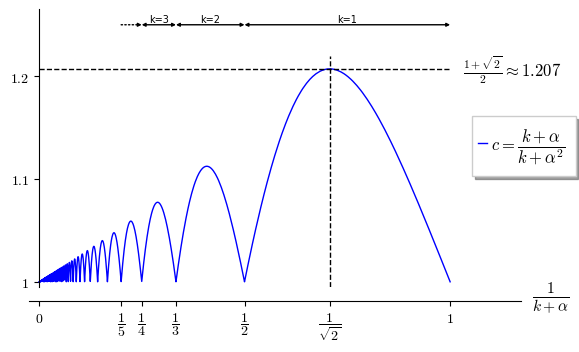}
    \caption{The constant $c$ in the minimum
degree fraction $1-\frac{1}{c\cdot q}$, for $b=\frac{q}{k+\alpha}$ with $k\in\mathbb{N}$ and $\alpha\in[0,1]$.}
    \label{fig:multiplicativeconstant}
\end{figure}

\section{The General Construction and Large \texorpdfstring{$q$}{q}}\label{sec:large}


The following lemma codifies when our general construction does not admit a fractional $K_q$-decomposition. First we need some additional notation as follows. For graphs $F,G$, we denote the set of copies of $F$ in $G$ by $\binom{G}{F}$. Given an assignment of weights $\phi$ to all all copies of $F$ in $G$, $\phi:\binom{G}{F}\to\mathbb{R}$, and given an edge $e$ of $G$, we let $\partial\phi(e)$ denote the total weight of $\phi$ over the edge $e$, that is, $\partial\phi(e):= \sum_{F: e\subseteq F} \phi(F)$.

\begin{lem}\label{lem:construction}
    For every integers $q,b,v,d\geq 1$ with $v\geq 2d$ and $2q>2b\geq q$, there exists a graph $G$ on $n$ vertices with $\delta(G)\geq \big(1-\frac{v-d}{bv}\big) n$ and such that, if 
    \[\frac{d}{v}<\frac{(b-1)\cdot (q-b)}{\binom{q}{2}-(q-b)}
    ,\] 
    then there exists no fractional $K_q$-decomposition of $G$.
\end{lem}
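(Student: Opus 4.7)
The plan is to set $G := K_b \circ H$, where $H$ is any $d$-regular graph on $v$ vertices (such a graph exists since $v \geq 2d$). Then $n := v(G) = bv$, and because each vertex of $G$ has $(b-1)v$ neighbours across the other copies of $H$ together with $d$ internal neighbours, we get $\delta(G) = (b-1)v + d = \bigl(1 - \frac{v-d}{bv}\bigr)\, n$, exactly matching the required minimum-degree bound.

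To rule out a fractional $K_q$-decomposition, I would use a global double-counting argument. Call an edge of $G$ \emph{internal} if both endpoints lie in a common copy of $H$, and \emph{cross} otherwise; there are $\frac{bdv}{2}$ internal edges and $\binom{b}{2} v^2$ cross edges in total. The crucial combinatorial input is the observation that, since $\frac{q}{2} \leq b < q$, every $K_q$ in $G$ contains at least $q-b$ internal edges: indeed, if the $q$ vertices of a copy of $K_q$ split as $k_1, \ldots, k_b$ across the copies of $H$ with $\sum_i k_i = q$, then by convexity of $\binom{\cdot}{2}$ the sum $\sum_i \binom{k_i}{2}$ is minimised at the most balanced distribution, and since $1 < q/b \leq 2$ this places $2$ vertices in each of $q-b$ parts and $1$ vertex in each of the remaining $2b-q$ parts, contributing exactly $q-b$ internal edges.

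Assuming for contradiction that a fractional $K_q$-decomposition $w$ exists, let $i_K \geq q - b$ and $c_K = \binom{q}{2} - i_K$ denote the numbers of internal and cross edges in each $K_q$-copy $K$. Summing the decomposition constraint $\sum_{K \ni e} w(K) = 1$ separately over the two edge types yields
\[
\sum_K w(K)\, i_K \;=\; \frac{bdv}{2}, \qquad \sum_K w(K)\, c_K \;=\; \binom{b}{2} v^2.
\]
Adding these gives $\binom{q}{2} \sum_K w(K) = \frac{bdv}{2} + \binom{b}{2} v^2$. Combining this identity with the inequality $\sum_K w(K)\, i_K \geq (q-b) \sum_K w(K)$ and rearranging will produce precisely $\frac{d}{v} \geq \frac{(b-1)(q-b)}{\binom{q}{2} - (q-b)}$, contradicting the hypothesis on $d/v$.

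The only non-routine step is the pigeonhole claim controlling the minimum number of internal edges in a $K_q$; everything else is short arithmetic, and I do not expect any real obstacle beyond bookkeeping.
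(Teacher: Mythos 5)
Your proposal is correct and follows essentially the same route as the paper: take $G = K_b \circ H$ for a $d$-regular $H$, invoke the pigeonhole/convexity bound that each $K_q$ contains at least $q-b$ internal edges, and double-count internal versus cross edges against the total decomposition weight to derive the stated contradiction. The paper bounds the cross-edge sum directly by $\bigl(\binom{q}{2}-(q-b)\bigr)\sum_K \phi(K)$ rather than first forming the sum identity $\binom{q}{2}\sum_K w(K) = \frac{bdv}{2} + \binom{b}{2}v^2$, but the two reductions are arithmetically equivalent.
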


\begin{proof}
Let $H$ be a $d$-regular bipartite graph on $v$ vertices, and let $G := K_b\circ H$. Recall that we say that an edge inside a copy of $H$ is an {\em internal} edge , while all other edges are {\em cross} edges. Observe that $G$ contains $\frac{b\cdot d\cdot v}{2}$ internal edges, and $\binom{b}{2}v^2$ cross edges. Assume that there exists a fractional $K_q$-decomposition $\phi$ of $G$. By the pigeonhole principle, every copy of $K_q$ in $K_b\circ H$ contains at least $q-b$ internal edges. We obtain by counting the internal edges of $G$ that
\[\frac{b\cdot d\cdot v}{2} 
= \sum_{\substack{e\in G\\\textrm{internal }e}}\partial\phi(e)
= \sum_{\substack{e\in G\\\textrm{internal }e}} \sum_{H\colon e\in H} \phi(H) 
= \sum_{H\in \binom{G}{K_q}}\sum_{\substack{e\in H\\\textrm{internal }e}} \phi(H) 
\geq (q-b)\sum_{H\in\binom{G}{K_q}} \phi(H)\]
and similarly we obtain by counting for the cross edges of $G$ that
\[\binom{b}{2}v^2 
= \sum_{\substack{e\in G\\\textrm{cross }e}}\partial\phi(e)
= \sum_{\substack{e\in G\\\textrm{cross }e}} \sum_{H\colon e\in H} \phi(H) \leq \Big[\binom{q}{2}-(q-b)\Big]\sum_{H\in\binom{G}{K_q}} \phi(H).\]
Combining these two inequalities we obtain,
\[\frac{b\cdot d\cdot v}{2}\cdot\Big[\binom{q}{2}-(q-b)\Big]\geq \binom{b}{2}v^2 \cdot(q-b).\]
Hence
\[\frac{d}{v}\cdot \Big[\binom{q}{2}-(q-b)\Big]\geq (b-1)\cdot(q-b),\]
a contradiction with our assumption about the values of $q, b, v,$ and $d$. Therefore $G$ admits no fractional $K_q$-decomposition as desired.
\end{proof}

\subsection{A construction for all \texorpdfstring{$q\geq 6$}{q>=6}}\label{ss:AlmostAllq}

We are now ready to prove Theorem~\ref{thm:mainExistence2} when $q\geq 6$ as follows. 

\begin{lem}\label{lem:AlmostAllq}
    For every $q\geq 6$, there exist a $c>1$ and graph $G$ on $n$ vertices with 
    \[\delta(G)\geq \Big(1-\frac{1}{c\cdot(q+1)}\Big) n,\]
    such that there exists no fractional $K_q$-decomposition of $G$.
\end{lem}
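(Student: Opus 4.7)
The plan is to apply \cref{lem:construction} with an integer $b$ chosen strictly between $\frac{q+1}{2}$ and $q-1$, which exists because $q \geq 6$; two convenient choices are $b := \frac{q}{2}+1$ for even $q$ and $b := \frac{q+3}{2}$ for odd $q$. Setting $r := \frac{(b-1)(q-b)}{\binom{q}{2}-(q-b)}$, I would choose integers $d,v$ with $d/v$ strictly less than but arbitrarily close to $r$. A direct check shows that $r<1/2$ whenever $b>\frac{q+1}{2}$ (noting that $r=1/2$ exactly at the endpoint $b=\frac{q+1}{2}$, and that the quadratic $(q-b)(2b-1)$ entering the comparison is past its maximum at $\frac{2q+1}{4}<\frac{q+1}{2}$), so the hypothesis $v\geq 2d$ of \cref{lem:construction} is easily arranged. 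That lemma then produces a graph $G = K_b \circ H$ (for $H$ a $d$-regular bipartite graph on $v$ vertices) with no fractional $K_q$-decomposition and satisfying $\delta(G) \geq \bigl(1 - \tfrac{1-d/v}{b}\bigr)v(G)$.

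The crux is to show that $\tfrac{1-r}{b} < \tfrac{1}{q+1}$, so that taking $d/v$ close enough to $r$ forces the minimum degree fraction strictly above $1-\tfrac{1}{q+1}$. Substituting the explicit formula $1-r = \frac{\binom{q}{2}-b(q-b)}{\binom{q}{2}-(q-b)}$ and clearing denominators, this inequality reduces after a short calculation to
\[(q-b)(2b-q) \;>\; b-1.\]
For $q$ even and $b = \frac{q}{2}+1$, the left side is $q-2$ and the right side is $\frac{q}{2}$, so the inequality holds for every $q\geq 6$. For $q$ odd and $b = \frac{q+3}{2}$, the left side is $\frac{3q-9}{2}$ and the right side is $\frac{q+1}{2}$, so it holds for every $q\geq 7$. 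Hence both choices work for every $q\geq 6$.

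To finish, pick $d/v$ sufficiently close to $r$ that $\tfrac{1-d/v}{b} < \tfrac{1}{q+1}$ still holds, and set $c := \frac{b}{(1-d/v)(q+1)}$; then $c>1$ and $\delta(G) \geq \bigl(1-\tfrac{1}{c\cdot(q+1)}\bigr)v(G)$, as required. The principal obstacle is the algebraic reduction to $(q-b)(2b-q) > b-1$ and checking it for the two parity cases; once that is carried out, everything else is a direct application of \cref{lem:construction}.
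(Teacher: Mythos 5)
Your proposal is correct and takes essentially the same approach as the paper: apply \cref{lem:construction} with an integer $b$ chosen strictly between $\frac{q+1}{2}$ and $q-1$, and verify that the resulting minimum-degree fraction exceeds $1-\frac{1}{q+1}$; your inequality $(q-b)(2b-q) > b-1$ is precisely the paper's $\big(b-(q-1)\big)\big(b-\frac{q+1}{2}\big)<0$ in disguise. The only cosmetic differences are that the paper uses the single choice $b=q-2$ for all $q\geq 6$ rather than a parity-based $b$, and that you spell out the check $r<1/2$ (needed for the $v\geq 2d$ hypothesis) which the paper leaves implicit.
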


\begin{proof}
    Fix an integer $q\geq 3$ and let $\varepsilon>0$ be an arbitrarily small rational compared to $1/q$. For a fixed integer $b<q$, we can define integers $d,v$ such that
    \[\frac{d}{v}=\frac{(b-1)\cdot (q-b)}{\binom{q}{2}-(q-b)+\varepsilon},\]
    and, by~\cref{lem:construction}, we obtain that there exists a graph $G$ on $n$ vertices with $\delta(G)\geq (1-\frac{v-d}{bv})\cdot n$ and admitting no fractional $K_q$-decomposition. Observe that $\frac{v-d}{bv}=\frac{1}{c\cdot(q+1)}$ with
    \[c := \frac{b\cdot\binom{q}{2}- b\cdot(q-b)+b\cdot \varepsilon}{\binom{q}{2}-b\cdot(q-b)+\varepsilon}\cdot\frac{1}{q+1}.\]
    Therefore, it remains to prove that there exists an integer $b$ with $2\leq b<q$ such that    
    \[\frac{b\cdot\binom{q}{2}- b\cdot(q-b)+b\cdot \varepsilon}{\binom{q}{2}-b\cdot(q-b)+\varepsilon}>(q+1).\]  
    
    \noindent As $\varepsilon$ is arbitrarily small compared to $1/q$, it suffices to find an integer $b$ with $2\leq b<q$ such that 
    \begin{equation}\label{eq:targetb}
        \frac{b\cdot\binom{q}{2}- b\cdot(q-b)}{\binom{q}{2}-b\cdot(q-b)}>(q+1).
    \end{equation}
    We then have
    \begin{align*}
        \cref{eq:targetb} 
        &\iff q^3-3\cdot b\cdot q^2 + q\cdot(2\cdot b^2+b-1)<0 \\
        &\iff 2\cdot b^2+ (1-3\cdot q)\cdot b +(q^2-1)<0\\
        &\iff \Big(b-(q-1)\Big)\cdot\Big(b-\frac{q+1}{2}\Big)<0.
    \end{align*}
\end{proof}

\begin{remark}
    The left-hand side of this inequality is a (convex) quadratic polynomial whose unique root is $b=q-1$ when $q=3$, but has two distinct roots for $q\geq 4$, namely \(b_+ := q-1\) and \(b_-:= \frac{q+1}{2}\).
    Observe that $|b_+-b_-|=1$ for $q=5$, and $|b_+-b_-|>1$ for $q\geq 6$. Therefore as discussed in the proof overview, for any $q\geq 6$, there exists an integer $b$ (e.g. $b=q-2$) satisfying~\cref{eq:targetb}, as desired.
\end{remark}

We note that, in particular, for every $q\geq 6$,~\cref{lem:AlmostAllq} holds with $c$ arbitrarily close to $\frac{(q-2)(q^2-q-4)}{(q^2-5q+8)(q+1)}$, obtained from~\cref{eq:targetb} with $b=q-2$.

\subsection{Large estimates}\label{ss:LargeCase}

We now prove Theorem~\ref{thm:mainAsymptotic2}, proving that $c$ can be arbitrarily close to $\frac{1+\sqrt{2}}{2}$ for large enough $q$.

\begin{proof}[Proof of~\cref{thm:mainAsymptotic2}]
    Let $d,v$ be integers such that 
    \[\sqrt{2}-1-\frac{\varepsilon}{4}\leq \frac{d}{v}\leq \sqrt{2}-1-\frac{\varepsilon}{5}.\] Let $q$ be a large enough integer compared to $\frac{1}{\varepsilon}$ and let $b:=\floor*{\frac{q+1}{\sqrt{2}}}$. 
    Then, as $q$ is large enough, we have 
    \[(b-1)(q-b) \geq 
    \Big(\frac{q+1}{\sqrt{2}}-2\Big)\Big(q - \frac{q+1}{\sqrt{2}}\Big)
    > \frac{q^2}{2}\Big(\sqrt{2}-1 - \frac{6-3\sqrt{2}}{q}\Big)
    \geq \frac{q^2}{2}\cdot\Big(\sqrt{2}-1-\frac{\varepsilon}{5}\Big).
    \]
    Therefore we have 
    \[\frac{d}{v}\leq\frac{2\cdot (b-1)\cdot (q-b)}{q^2}\leq \frac{(b-1)\cdot (q-b)}{\binom{q}{2}}\leq \frac{(b-1)\cdot (q-b)}{\binom{q}{2}-(q-b)}.\]  
    By~\cref{lem:construction}, there exists a graph $G$ on $n$ vertices with $\delta(G)\geq \big(1-\frac{v-d}{bv}\big)\cdot n$ such that $G$ has no fractional $K_q$-decomposition of $G$. Observe that
    \[\delta(G)\geq \Big(1-\frac{v-d}{bv}\Big)\cdot n = \Big(1-\frac{1}{Q}\Big)\cdot n, \]
    where 
    \[Q=\frac{bv}{v-d}=\Big(\frac{q+1}{\sqrt{2}}-1\Big)\cdot\frac{1}{1-\frac{d}{v}}\geq (q+1)\cdot \Big(\frac{1}{\sqrt{2}}\cdot\frac{1}{1-\frac{d}{v}}-\frac{\varepsilon}{4}\Big),\]
    where, in the last inequality, we used $q$ large enough compared to $\varepsilon$ and $d/v\leq\sqrt{2}-1$. 
    We then obtain
    \[Q
    \geq (q+1)\cdot \Big(\frac{1}{\sqrt{2}}\cdot\frac{1}{2-\sqrt{2}-\frac{\varepsilon}{4}}-\frac{\varepsilon}{4}\Big)
    \geq (q+1)\cdot \Big(\frac{1}{\sqrt{2}}\cdot\frac{1}{2-\sqrt{2}}-\varepsilon\Big)
    = (q+1)\cdot \Big(\frac{1+\sqrt{2}}{2}-\varepsilon\Big),\]
    hence 
    $\delta(G)\geq\left(1-\frac{1}{ \Big(\frac{1+\sqrt{2}}{2}-\varepsilon\Big)\cdot(q+1)}\right)n$, as desired.
\end{proof}

\section{The Small Cases}\label{sec:smallcases}

For $q \in \{4, 5\}$, the general construction does not provide a counterexample, so we turn to constructions specific to these values of $q$. 

\subsection{The case \texorpdfstring{$q=5$}{q=5}}\label{ss:q5}

We now prove Theorem~\ref{thm:mainExistence2} for $q=5$ as follows.

\begin{lem}\label{lem:SmallCaseFive}
    For $q = 5$, there exist a constant $c>1$ and graph $G$ such that $\delta(G) \geq \left(1-\frac{1}{c\cdot(q+1)}\right) v(G)$ and $G$ does not have a fractional $K_q$-decomposition. 
\end{lem}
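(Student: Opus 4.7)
The plan is to adapt the general construction of~\cref{lem:construction} by adjoining one extra ``fractional'' part, as foreshadowed in~\cref{ss:Intuition}. For $q=5$ no integer $b$ lies strictly between $(q+1)/2 = 3$ and $q-1 = 4$, so the three-part construction alone only achieves a minimum degree fraction of $1 - 1/(q+1)$. I would take $G$ to be the join of three disjoint copies of a $d$-regular graph $H$ on $v$ vertices together with an independent set $I$ of size $w$, and pick $v, d, w$ so that the $H$-internal edges and the $H$-to-$I$ edges are too sparse to decompose the edges between the copies of $H$. Concretely, fix a large integer $m$ and set $v = 7m$, $w = 4m$, and $d = 3m - 1$, with $H$ any $d$-regular graph on $v$ vertices (which exists since $dv$ is always even and $d < v$).

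A direct computation shows that every vertex in an $H$-copy has degree $2v + d + w = 21m-1$ while every vertex of $I$ has degree $3v = 21m$, so $\delta(G) = 21m-1$ and $v(G) = 3v + w = 25m$. Setting $c := 25m/(6(4m+1))$ then gives $\delta(G) = (1 - 1/(c \cdot (q+1))) v(G)$, and $c > 1$ for every $m \geq 7$.

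The main step is to show that $G$ admits no fractional $K_5$-decomposition. Classify the edges of $G$ into three types: \emph{$H$-internal} edges, \emph{cross-$H$} edges (between two distinct copies of $H$), and \emph{$H$-to-$I$} edges, with totals $|E_H| = 3dv/2$, $|E_C| = 3v^2$, and $|E_I| = 3vw$. For each copy $K$ of $K_5$ in $G$, let $a(K), b(K), c(K)$ denote the number of edges of each type in $K$, so $a(K) + b(K) + c(K) = 10$. The crux is the inequality
\[ c(K) \leq 4\, a(K) + \tfrac{1}{4}\, b(K) \quad \text{for every } K_5 \subseteq G. \]
Writing $k_i$ for the number of vertices of $K$ in the $i$-th copy of $H$ and $k_I \in \{0,1\}$ for those in $I$ (the bound $k_I\leq 1$ being forced by $I$ being independent), the triple $(a(K), b(K), c(K))$ depends only on $(k_1, k_2, k_3, k_I)$, so this reduces to a finite check over the partitions of $5$ into four parts with $k_I \leq 1$; equality holds precisely at the patterns $(2,2,1,0)$ and $(2,1,1,1)$.

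Summing the key inequality weighted by a hypothetical fractional decomposition $\phi$ then yields $3v^2 = \sum_K \phi(K) c(K) \leq 4 \cdot \tfrac{3dv}{2} + \tfrac{1}{4} \cdot 3vw$, which simplifies to $w \geq 4(v - 2d)$. Plugging in the chosen parameters gives $4m \geq 4m + 8$, a contradiction, as desired. The main obstacle is identifying the correct coefficients $(4, \tfrac{1}{4})$ in the key inequality; these are dictated by the two tight patterns above, but once they are found the remaining case analysis and counting are routine.
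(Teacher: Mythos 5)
Your proof is correct and takes essentially the same approach as the paper: the paper likewise uses the join of three copies of a regular graph $H$ (with $d/v$ slightly below $3/7$) and an independent set of size roughly $\tfrac{4}{7}v$, and likewise rules out a fractional $K_5$-decomposition by counting internal, cross, and ``small'' ($H$-to-$I$) edges per clique. The only difference is cosmetic: the paper organizes the count via the weights $X_0, X_1$ of cliques meeting $I$ in $0$ or $1$ vertices, whereas you package the same linear-algebra into the single per-clique inequality $c(K)\le 4a(K)+\tfrac14 b(K)$ (note a small labeling slip --- the way $a,b,c$ are introduced lists them in the order $H$-internal, cross, small, but the summation step requires $a=$ internal, $b=$ small, $c=$ cross, which is clearly the intended reading).
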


\begin{proof}
    Let $\varepsilon>0$ be an arbitrarily small rational number, and let $\alpha := \frac{4}{7}+\varepsilon$. Let $v$ be an even integer such that $\alpha v\in \mathbb{N}$ (such $v$ exists as $\alpha$ is rational), and let $d:=(1-\alpha)<1/2$. Note $dv =v-\alpha v$ and hence $dv$ is integral. Let $H$ be a $(dv)$-regular balanced bipartite graph on $v$ vertices (such exists as $dv$ is integral and $dv \le v/2$), and let $I$ be an independent set of $\alpha v$ vertices. Let $G$ be the join of $H$, $H$, $H$ and $I$. 

    First observe that $G$ is $3v$-regular on $(3+\alpha)v$ vertices, and therefore we have 
    \[\delta(G)=\Big(1-\frac{\alpha}{3+\alpha}\Big)v(G)=\Big(1-\frac{1}{c\cdot (q+1)}\Big)v(G),\]
    with 
    \[c:=\frac{3+\alpha}{\alpha\cdot(q+1)}=\frac{25+7\cdot\varepsilon}{24+42\cdot\varepsilon},\]
    hence $c\geq 1.04$ for sufficiently small $\varepsilon$ as $\frac{25}{24} > 1.04$.
    Thus it suffices to prove that $G$ admits no fractional $K_q$-decomposition. 
    Assume for the purposes of contradiction that $G$ has a fractional $K_q$-decomposition $\phi$. It is helpful to consider the various types of edges of $G$. To that end, we say that an edge inside a copy of $H$ is an {\em internal} edge, that an edge between $I$ and $G\setminus I$ is a {\em small} edge, while all other edges are {\em cross} edges, see~\cref{fig:TikzCaseFive}.

    \begin{figure}[ht]
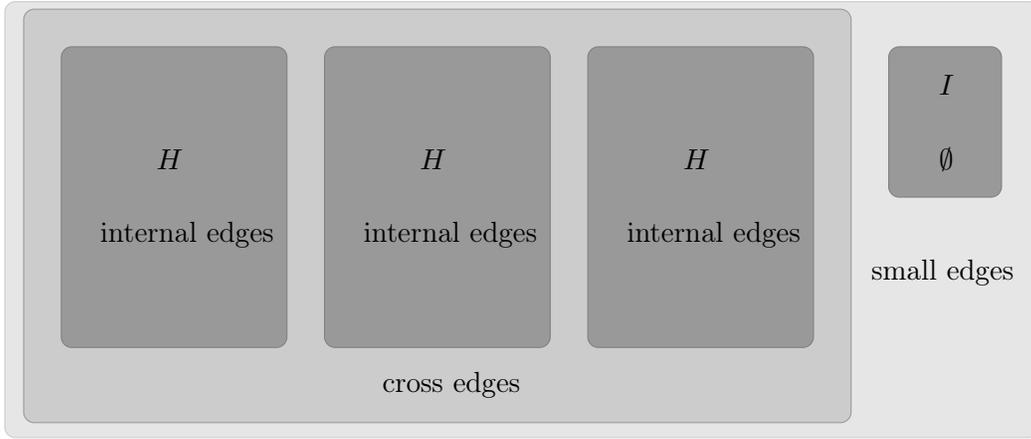

        \centering
        \TikzCaseFive{2}
        \caption{Internal, cross and small edges in the join of $H$, $H$, $H$, and $I$.}
        \label{fig:TikzCaseFive}
    \end{figure}

    Observe that $G$ contains all cross edges between copies of $H$, that is, $3v^2$ cross edges; each copy of $H$ is $(d v)$-regular on $v$ vertices, therefore $G$ contains $\frac{3 d v^2}{2}$ internal edges; finally $G$ contains all small edges between $I$ and every copy of $H$, that is, $3\alpha v^2$ small edges. Since $I$ is independent, we have that for any $F \in \binom{G}{K_q},$ $|V(F) \cap V(I)| \le 1$. For $i\in\{0,1\}$, let $\mc{F}_i$ be the family of all copies of $K_q$ in $G$ intersecting $I$ on exactly $i$ vertices, that is,
    \[\mc{F}_i := \set*{F\in \binom{G}{K_q}\colon |V(F)\cap I|=i},\]
    and let \[X_i:=\sum_{F\in \mc{F}_i}\phi(F).\]
    For every $F\in\mc{F}_0$, as $H$ is bipartite and therefore triangle-free, we note that $F$ contains $2$ internal edges and $8$ cross edges (and no small edges); on the other hand, for every $F\in\mc{F}_1$, we note that $F$ contains at least $1$ internal edge, at most $5$ cross edges, and exactly $4$ small edges. As $\phi$ is a fractional $K_q$-decomposition of $G$, we have that $\partial\phi(e)=1$ for every edge $e\in G$. In particular we have
    \[\big|\set{e\in G\colon e\textrm{ is an internal edge of } G}\big|=\sum_{\substack{e\in G\\\textrm{internal }e}} \partial\phi(e).\]
    Recall that $G$ contains $\frac{3 d v^2}{2}$ internal edges, therefore,  with $d=(1-\alpha)$, we obtain 
    \[\frac{3 (1-\alpha) v^2}{2} =
        \sum_{\substack{e\in G\\\textrm{internal }e}} \partial\phi(e) 
        =\sum_{\substack{e\in G\\\textrm{internal }e}} \sum_{F\colon e\in F} \phi(F) 
        = \sum_{F\in \binom{G}{K_q}}\sum_{\substack{e\in F\\\textrm{internal }e}} \phi(F).\]
    Then, recall that every $F\in\mc{F}_0$ contains $2$ internal edges, while every $F\in\mc{F}_1$ contains at least $1$ internal edge, therefore
    \begin{equation}
        \frac{3 (1-\alpha) v^2}{2} 
        = \sum_{F\in \binom{G}{K_q}}\sum_{\substack{e\in F\\\textrm{internal }e}} \phi(F)
        = \sum_{F\in \mc{F}_0}\sum_{\substack{e\in F\\\textrm{internal }e}} \phi(F)
        + \sum_{F\in \mc{F}_1}\sum_{\substack{e\in F\\\textrm{internal }e}} \phi(F)
        \geq 2X_0+X_1.\label{eq:internal}
    \end{equation} 
    Similarly for cross edges we obtain,
    \begin{alignat}{4}
        3v^2 &=&
        \sum_{\substack{e\in G\\\textrm{cross }e}} \partial\phi(e) 
        &=&\sum_{\substack{e\in G\\\textrm{cross }e}} \sum_{F\colon e\in F} \phi(F) 
        &=& \sum_{F\in \binom{G}{K_q}}\sum_{\substack{e\in F\\ \textrm{cross }e}} \phi(F)
        &\leq 8X_0+5X_1,\label{eq:cross}\\
        \intertext{and for small edges,}
        3\alpha v^2 &=&
        \sum_{\substack{e\in G\\\textrm{small }e}} \partial\phi(e) 
        &=&\sum_{\substack{e\in G\\\textrm{small }e}} \sum_{F\colon e\in F} \phi(F) 
        &=& \sum_{F\in \binom{G}{K_q}}\sum_{\substack{e\in F\\\textrm{small }e}} \phi(F)
        &= 4X_1.\label{eq:small}
    \end{alignat}
     By~\cref{eq:small}, we deduce $X_1 = \frac{3\alpha v^2}{4}$. Therefore, substituting this value of $X_1$ into~\cref{eq:internal,eq:cross}, we obtain $\frac{3(1-\alpha)v^2}{2} \geq  2X_0+\frac{3\alpha v^2}{4}$ and $3v^2\leq 8X_0+\frac{15\alpha v^2}{4}$, respectively.
    Isolating $X_0$ we deduce that,
    \[\frac{1}{8}\Big(3-\frac{15\alpha}{4}\Big)v^2\leq X_0\leq \frac{1}{2}\Big(\frac{3(1-\alpha)}{2}-\frac{3\alpha}{4}\Big)v^2.\]
    Simplifying by $\frac{v^2}{4}>0$, it follows that
    \[\frac18(12-15\alpha)\leq 3(1-\alpha)-\frac{3\alpha}{2},\]
    and therefore $7\alpha\leq 4$, which is the desired contradiction since $\alpha > \frac{4}{7}$ by definition. 
\end{proof}

\subsection{The case \texorpdfstring{$q=4$}{q=4}}\label{ss:q4}

The construction for $q = 4$ is similar to that of $q=5$: we take the join of two larger dense graphs $H$ and one small sparse graph $I$. Yet, in this case, taking $I$ to be independent does not provide a counterexample, but rather leads to an extremal example for Conjecture \ref{conj:Folklore}. Instead, we allow $I$ to be $d$-regular for a small value $d$. We do not include our optimization of our two parameters in the following argument, but we follow that same principles as in the proof of Lemma \ref{lem:SmallCaseFive}.

We now prove Theorem~\ref{thm:mainExistence2} for $q=4$ as follows.

\begin{lem}\label{lem:SmallCaseFour}
    For $q = 4$, there exist a constant $c>1$ and graph $G$ such that $\delta(G) \geq \left(1-\frac{1}{c(q+1)}\right)v(G)$ and $G$ does not have a fractional $K_q$-decomposition. 
\end{lem}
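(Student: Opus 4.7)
The plan is to adapt the construction of~\cref{lem:SmallCaseFive} to $q=4$ by taking the join of two copies of a bipartite regular graph $H$ together with one bipartite regular graph $I$ (rather than three copies of $H$ together with an independent set); as indicated in~\cref{ss:Intuition}, the choice of an independent $I$ fails here, reproducing the extremal value of~\cref{conj:Folklore} instead of giving a counterexample. Concretely, I would fix a rational $\alpha \in (1/2,1)$ close to $\sqrt{3}-1$, a rational $d$ strictly less than but close to $(2+\alpha-\alpha^2)/(4+\alpha)$, and set $d' := (d+\alpha-1)/\alpha$. For $v$ sufficiently divisible (so that $v, \alpha v, dv, d'\alpha v$ are all integers and the balanced bipartite regular graphs below exist), let $H$ be a balanced $(dv)$-regular bipartite graph on $v$ vertices, let $I$ be a balanced $(d'\alpha v)$-regular bipartite graph on $\alpha v$ vertices, and let $G$ be the join of two copies of $H$ and one copy of $I$. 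The condition $d'\alpha = d+\alpha-1$ is chosen precisely so that both $H$-vertices and $I$-vertices have degree $(1+d+\alpha)v$ in $G$, yielding a minimum degree fraction $(1+d+\alpha)/(2+\alpha)$; this exceeds $4/5$ (equivalently, gives $c>1$) whenever $5d+\alpha > 3$, a condition compatible with the others.

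For the non-existence of a fractional $K_4$-decomposition, I would assume for contradiction that $\phi$ is one. Since $H$ and $I$ are both bipartite (hence triangle-free), every copy of $K_4$ in $G$ has vertex-distribution across the three parts $(H_1, H_2, I)$ equal to one of the six profiles $(2,2,0), (2,0,2), (0,2,2), (2,1,1), (1,2,1), (1,1,2)$. I would group these into four families $\mc{F}_A, \mc{F}_B, \mc{F}_C, \mc{F}_D$ corresponding to $\{(2,2,0)\}$, $\{(2,0,2),(0,2,2)\}$, $\{(2,1,1),(1,2,1)\}$, $\{(1,1,2)\}$ respectively, and set $X_\bullet := \sum_{F \in \mc{F}_\bullet} \phi(F)$. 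Classifying the relevant edges of $G$ as \emph{$H$-internal}, \emph{$I$-internal}, or \emph{cross} (between the two copies of $H$), a short case check records how many of each type a $K_4$ from each family contains, and double-counting yields the three linear identities $2X_A + X_B + X_C = dv^2$, $X_B + X_D = \frac{d'\alpha^2 v^2}{2}$, and $4X_A + 2X_C + X_D = v^2$.

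The key step is to take the linear combination $2 \cdot (\text{first identity}) + (\text{second identity}) - (\text{third identity})$, which annihilates $X_A$, $X_C$, and $X_D$ and yields
\[
3X_B \;=\; 2dv^2 + \tfrac{d'\alpha^2 v^2}{2} - v^2 \;=\; \tfrac{v^2}{2}\bigl(4d + d'\alpha^2 - 2\bigr).
\]
By the choice of parameters, $4d+d'\alpha^2 < 2$, so the right-hand side is strictly negative, forcing $X_B < 0$ and contradicting $X_B \geq 0$ (as $X_B$ is a sum of non-negative weights).

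The main obstacle is parameter selection: one needs $\alpha, d, d'$ rational satisfying simultaneously (a) the integrality and density constraints so that the balanced bipartite regular graphs $H$ and $I$ exist for some $v$, (b) the strict inequality $4d+d'\alpha^2 < 2$ driving the contradiction, and (c) the inequality $5d+\alpha > 3$ yielding $c>1$. Since the irrational optimum $\alpha = \sqrt{3}-1$, $d = 2\sqrt{3}-3$ lies strictly in the interior of the region carved out by (b) and (c) (the minimum degree fraction $(1+d+\alpha)/(2+\alpha)$ peaks at $6 - 3\sqrt{3} \approx 0.804 > 4/5$), a rational perturbation suffices---for example, $\alpha = 3/4$, $d = 11/24$, $d' = 5/18$ with $v = 72k$ is admissible and yields $c = 66/65$.
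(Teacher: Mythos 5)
Your construction and parameter choices coincide with the paper's: both take $G$ to be the join of two balanced $(dv)$-regular bipartite graphs on $v$ vertices with a small balanced regular bipartite graph on $\alpha v$ vertices, optimize at $\alpha\to\sqrt{3}-1$, obtain the threshold $d<\tfrac{2+\alpha-\alpha^2}{4+\alpha}$ (equivalently the paper's $d_1<\tfrac{4\alpha-2}{4+\alpha}$ after the change of variables $d_1=d'\alpha=d+\alpha-1$), and arrive at $c\to\tfrac{5+3\sqrt{3}}{10}$. The one genuine refinement on your side is in the counting: the paper partitions the $K_4$'s into three families $\mc{F}_0,\mc{F}_1,\mc{F}_2$ by $|V(F)\cap I|$ and, because $\mc{F}_2$ mixes the profiles $(2,0,2)/(0,2,2)$ with $(1,1,2)$, must work with upper and lower bounds for the cross and $H$-internal edge counts. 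You instead split $\mc{F}_2$ into the two homogeneous families $\mc{F}_B$ and $\mc{F}_D$, which makes all three edge-type equations exact identities, and then a single linear combination isolates $3X_B=\tfrac{v^2}{2}(4d+d'\alpha^2-2)$ and forces $X_B<0$. This is a slightly cleaner route to the same contradiction; the linear algebra and all side conditions (existence of the bipartite graphs requires $d,d'\le\tfrac12$; $c>1$ requires $5d+\alpha>3$; your explicit rational witness $\alpha=3/4$, $d=11/24$, $d'=5/18$, $v=72k$ satisfies everything and gives $c=66/65$) check out. In short: correct, same approach, with a modest bookkeeping improvement.
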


\begin{proof}
    Let $\alpha\in(\frac12,1)$ be a rational number. Let $d_1:=\frac{4\alpha-2}{4+\alpha}-\varepsilon\leq \frac{\alpha}{2}$ for some arbitrarily small rational number $\varepsilon$, and let $d_2 := d_1 +(1-\alpha)\leq \frac{1}{2}$. 
    
    Let $v$ be an even integer such that $\alpha \cdot v$, $d_2\cdot v$, and therefore $d_1\cdot  v$ are even integers. Let $H$ be a $(d_2\cdot v)$-regular balanced bipartite graph on $v$ vertices, and let $I$ be an $(d_1\cdot  v)$-regular balanced bipartite graph on $\alpha\cdot  v$ vertices. Let $G$ be the join of $H$, $H$ and $I$. 
    
    Observe that, by definition of $d_2$, we have that $G$ is $[(2+d_1)\cdot v]$-regular on $n:=(2+\alpha)\cdot v$ vertices. Thus,
    \[\delta(G)=n-(\alpha-d_1)\cdot v 
    = \Big(1-\frac{\alpha-d_1}{2+\alpha}\Big)\cdot n
    =\Big(1-\frac{1}{c\cdot (q+1)}\Big)\cdot n,\]
    with 
    \[c:=\frac{2+\alpha}{5\cdot (\alpha-d_1)}=\frac{(2+\alpha)(4+\alpha)}{5\cdot (\alpha^2+2+\varepsilon)}.\]
    By taking $\varepsilon>0$ arbitrarily small and $\alpha$ arbitrarily close to $\sqrt{3}-1$ we obtain that $c$ is arbitrarily close to $\frac{5+3\sqrt{3}}{10}\approx1.0196$.
    
    It remains to show that $G$ does not admit a fractional decomposition. For a contradiction, assume that $G$ contains a fractional $K_q$-decomposition $\phi$. We now consider the various types of edges in $G$. We say that an edge inside a copy of $H$ is an {\em $H$-internal} edge, an edge inside $I$ is an {\em $I$-internal} edge, that an edge between $I$ and $G\setminus I$ is a {\em small} edge, while edges between copies of $H$ are {\em cross} edges, see~\cref{fig:TikzCaseFour}.

   \begin{figure}[ht]
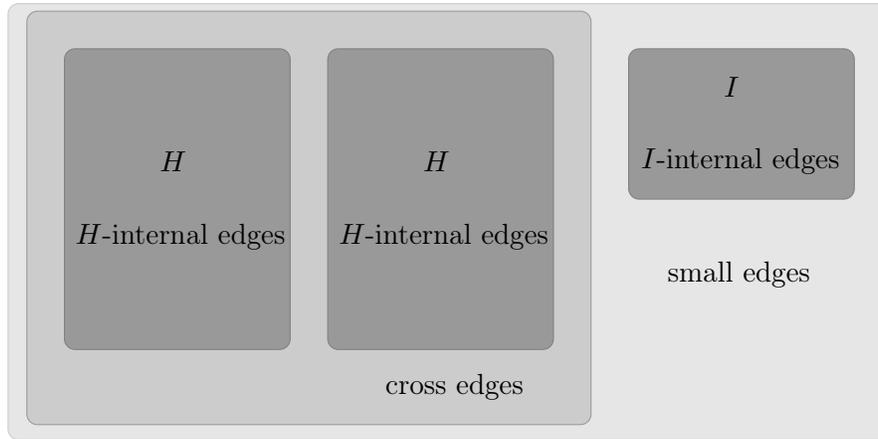

        \centering
        \TikzCaseFour{2}
        \caption{$H$-internal, cross, small and $I$-internal edges in the join of $H$, $H$, and $I$.}
        \label{fig:TikzCaseFour}
    \end{figure}

    Observe that $G$ contains all cross edges between copies of $H$, that is, $v^2$ cross edges; each copy of $H$ is $(d_2\cdot v)$-regular on $v$ vertices, therefore $G$ contains $d_2\cdot v^2$ $H$-internal edges; $I$ is $(d_1\cdot v)$-regular on $\alpha\cdot v$ vertices, therefore $G$ contains $\frac{\alpha\cdot d_1\cdot v^2}{2}$ $I$-internal edges; finally $G$ contains all small edges between $I$ and both copies of $H$, that is, $2\alpha v^2$ small edges. Since $I$ is a bipartite graph, we have that for any $F \in \binom{G}{K_q},$ $|V(F) \cap V(I)| \le 2$. For $i\in\{0,1,2\}$, let $\mc{F}_i$ be the family of all copies of $K_q$ intersecting $I$ in exactly $i$ vertices, that is,
    \[\mc{F}_i = \set*{F\in \binom{G}{K_q}\colon |V(F)\cap I|=i},\]
    and let 
    \[X_i = \sum_{F\in\mc{F}_i}\phi(F).\]
     Recall that $H$ is bipartite and so, for every $F \in \binom{G}{K_q},$ we have $|V(F) \cap V(H)| \le 2$. Thus we note that for every $F\in \mc{F}_0$, $F$ contains $2$ $H$-internal edges and $4$ cross edges. For every $F\in \mc{F}_1$, $F$ contains $1$ $H$-internal edge and $2$ cross edges (and $3$ small edges). For every $F\in \mc{F}_2$, $F$ contains $1$ $I$-internal edges, at most $1$ cross edge, and at most $1$ $H$-internal edge (and $4$ small edges).
    Therefore, similarly to the proof of~\cref{lem:SmallCaseFive}, we obtain, respectively from cross, $H$-internal and $I$-internal edges,
    \begin{alignat}{3}
            4X_0+2X_1 \leq &~v^2   &&\leq 4X_0+2X_1+X_2,&\label{eq:CaseFourcross}\\
        2X_0+X_1  \leq &~d_2 \cdot v^2 &&\leq 2X_0+X_1+ X_2,&\label{eq:CaseFourH}\\
        &\dfrac{\alpha \cdot d_1 \cdot v^2}{2}&&=X_2.\label{eq:CaseFourI}&    
    \end{alignat}
    Combining~\cref{eq:CaseFourcross,eq:CaseFourH} we deduce that 
    \(v^2-X_2\leq 4x_0+2X_1\leq 2\cdot d_2\cdot v^2,\)
    hence using~\cref{eq:CaseFourI} we obtain that
    \[v^2\leq 2 \cdot d_2 \cdot v^2 + \frac{\alpha\cdot d_1 \cdot v^2}{2}.\]
    By the definition of $d_2$ we get $1\leq 2\cdot (d_1+(1-\alpha)) + \frac{\alpha \cdot d_1 }{2},$
    and hence
    $d_1\geq\frac{4\alpha-2}{4+\alpha},$ contradicting the definition of $d_1$. Therefore $G$ has no fractional $K_q$-decomposition, as desired.
\end{proof}

\section{Acknowledgements}
The authors would like to thank both referees for several helpful comments.  The first author would like to thank Victor Falgas Ravry and Maryam Sharifzadeh for assistance in locating the original thesis of Gustavsson.

\bibliographystyle{plain}
\bibliography{bibliography}

\end{document}